\tikzstyle{vertex}=[circle, draw, inner sep=0pt, minimum size=4pt]
\newcommand{\vertex}{\node[vertex]}
\theoremstyle{plain}
\newtheorem{thm}{Theorem}[section]
\newtheorem{lem}[thm]{Lemma}
\newtheorem{pro}[thm]{Proposition}
\newtheorem{cor}[thm]{Corollary}
\newtheorem{conjecture}[thm]{Conjecture}
\theoremstyle{remark}
\theoremstyle{definition} 
\newtheorem{definition}[thm]{Definition}
\numberwithin{equation}{section}
\newcommand{\Z}{\mathbb{Z}}   
\newcommand{\Q}{\mathbb{Q}}
\newcommand{\F}{\mathbb{F}}
\newcommand{\ackn}{  \noindent{\sc Acknowledgement }\hspace{5pt} }
\renewcommand{\phi}{\varphi}
\def\G{{\mathcal G}}
\def\H{{\mathcal H}}
\begin{document}

\author{Ilir Snopce}
\address{Universidade Federal do Rio de Janeiro\\
  Instituto de Matem\'atica \\
  21941-909 Rio de Janeiro, RJ \\ Brazil }
\email{ilir@im.ufrj.br}

\author{Pavel Zalesskii}
\address{University of Bras\'ilia\\
  Department of  Mathematics \\
  70910-9000, Bras\'ilia \\ Brazil }
\email{pz@mat.unb.br}

\thanks{}

\title[Right-angled Artin pro-$p$ groups] {Right-angled Artin pro-$p$ groups}

\begin{abstract} 
Let $p$ be a prime. The right-angled Artin pro-$p$ group $G_{\Gamma}$  associated  to a fnite simplicial graph  $\Gamma$ is the pro-$p$ completion of the right-angled Artin group associated to $\Gamma$. We prove that the following assertions are equivalent:
(i) no induced subgraph of $\Gamma$ is a square or a line with four vertices (a path of length 3);
(ii) every closed subgroup of  $G_{\Gamma}$ is itself a right-angled Artin pro-$p$ group (possibly infinitely generated); 
(iii) $G_{\Gamma}$ is a Bloch-Kato pro-$p$ group;
(iv) every closed subgroup of  $G_{\Gamma}$ has torsion free abelianization;
(v) $G_{\Gamma}$ occurs as the maximal  pro-$p$ Galois group $G_K(p)$ of some field $K$ containing a primitive $p$th root of unity;
 (vi) $G_{\Gamma}$ can be constructed from $\Z_p$  by iterating two group theoretic operations, namely, direct products with $\Z_p$ and  free pro-$p$ products. 
  This settles in the affirmative  a conjecture of Quadrelli and Weigel. Also, we show that the Smoothness Conjecture of De Clercq and Florens holds for right-angled Artin pro-$p$ groups. Moreover, we prove that  $G_{\Gamma}$ is coherent if and only if each circuit of  $\Gamma$ of length greater than three has a chord.

\end{abstract}

\subjclass[2010]{ Primary  20E18, 12F10; Secondary 20F36, 20E06, 20E08, 12G05}

\maketitle
\section{Introduction}
 Throughout the paper $p$ denotes a
prime. Let $K$ be a field. The absolute Galois group of $K$ is the profinite group $G_K= \textrm{Gal}(K_s/K)$, where $ K_s$ is a separable closure of $K$. The maximal pro-$p$ Galois group of $K$, denoted by $G_K(p)$, is the maximal pro-$p$ quotient of $G_K$.  More precisely, $G_K(p) =  \textrm{Gal}(K(p)/K)$, where  $K(p)$ is 
the composite of all finite Galois $p$-extensions of $K$ (inside $K_s$). Describing absolute Galois groups of fields among profinite groups is one of the most important problems in Galois theory. Already describing  $G_K(p)$  among pro-$p$ groups is a highly non-trivial task; there has been an intensive study in this direction during the last few decades. Efrat conjectured that if  $G_K(p)$ is finitely generated, then it can be built from $\Z_p$, Demushkin groups and $\Z / 2\Z$ if $p=2$,  by iterating two group theoretic operations, namely, free  pro-$p$ products and certain semidirect products; this is the so-called elementary type conjecture (cf. \cite{Ef97} and  \cite{Ef98}). Pro-$p$ groups that can be obtained as in the elementary type conjecture are said to be of \emph{elementary type}.

\smallskip

A particular challenge is to determine what special properties $G_K(p)$ have among all pro-$p$ groups; so far only a few properties have been found (cf. for example \cite{BeNiJaJo07}, \cite{MiTa16}, \cite{EfQu19},  \cite{QuWe18}  and references therein).  A pro-$p$ group $G$ is called \emph{$H^\bullet$-quadratic} (or simply \emph{quadratic}) if the  the graded algebra $H^\bullet(G,\F_p) = \bigoplus_{n\geq0}H^n(G,\F_p)$, endowed with the cup product and with $\F_p$ as a trivial $G$-module,
is a \emph{quadratic algebra} over $\F_p$, i.e., all its elements of positive degree are combinations
of products of elements of degree 1, and its defining relations are homogeneous relations of degree 2 (see \cite{QuSnVa19}).  A pro-$p$ group $G$ is called a \emph{Bloch-Kato pro-$p$ group} if  every closed subgroup $U$ of $G$ is quadratic (see \cite{Qu14}).  From the positive solution of the Bloch-Kato conjecture by Rost and Voevodsky with a `patch' of Weibel (cf.  \cite{Ro02},  \cite{Vo11} and  \cite{We09}) it follows that the maximal pro-$p$ Galois group $G_K(p)$ of a field $K$ that contains a primitive $p$th root of unity is a Bloch-Kato pro-$p$ group. It is remarkable that we do not know a single example of a Bloch-Kato pro-$p$ group which is not of elementary type.

\smallskip

 Let $\Gamma$ be a simplicial graph with vertex set $V(\Gamma)$ and edge set $E(\Gamma)$. The \emph{right-angled Artin group}  $G(\Gamma)$ associated to $\Gamma$ is the group with generating set $V(\Gamma)$, with the relation that vertices  $u$ and  $v$ commute imposed  whenever $u$ and  $v$ span an edge of  $\Gamma$. Right-angled Artin groups play a central role in geometric group theory (see \cite{Cha07} and references therein). Fundamental groups of hyperbolic 3-manifolds (\cite{Ag13}, \cite{KaMa12} and \cite{Wi11}), 1-relator groups with torsion (\cite{Wi11}) and many small cancellation groups (\cite{Ag13} and \cite{Wi11}) are virtually special groups, and every  special group embeds in a right-angled Artin group (\cite{HaWi08}). 
   
    If $\Gamma$ is finite, then the \emph{right-angled Artin pro-$p$ group} $G_{\Gamma}$ associated to $\Gamma$ is defined to be the pro-$p$ completion of $G(\Gamma)$. Thus   $G_{\Gamma}$   is the pro-$p$ group defined by the following pro-$p$ presentation: 
$$ G_{\Gamma} = \langle  V(\Gamma) ~ \mid ~ [u, v]=1  \textrm{ if and only if }   \{ u, v \}   \in E( \Gamma) \rangle.$$ 
An infinitely generated \emph{right-angled Artin pro-$p$ group}  is defined by the same pro-$p$ presentation for an infinite profinite graph $\Gamma$ without loops (see  Section \ref{preliminaries}).
 Kropholler and Wilkes proved that right-angled Artin groups associated to finite graphs are determined by their pro-$p$ completions (see  \cite[Theorem~4]{KroWi16}). This shows that the class of right-angled Artin  pro-$p$ groups  is at least as rich as the class of the abstract ones.

\smallskip
 Kim and Roush proved that the $\F_p$-cohomology
 algebra of  a  right-angled Artin group  $G(\Gamma)$ associated to a finite graph  is quadratic ( \cite{KiRo80}), while Lorensen proved that    the $\F_p$-cohomology
 algebra of   $G(\Gamma)$  coincides with that of its pro-$p$ completion $G_{\Gamma}$ (\cite{Lo10}). Hence,  right-angled Artin pro-$p$ groups associated to finite graphs are quadratic. Therefore the question of characterizing Bloch-Kato pro-$p$ groups among the right-angled Artin pro-$p$ groups appears naturally.  This question has attracted experts in the field, since this could provide us a possible candidate for a Bloch-Kato pro-$p$ group which is not of  elementary type.  The following conjecture is due to Quadrelli and Weigel.

\begin{conjecture}\label{conj-Bloch}
Let $\Gamma$ be a finite simplicial graph and let $G = G_{\Gamma}$ be the  right-angled Artin pro-$p$ group associated to $\Gamma$. Then $G$ is a Bloch-Kato pro-$p$ group if and only if
no induced subgraph of $\Gamma$ has either of the two forms
\begin{figure}[H]
\[\begin{tikzpicture}[x=.3 cm, y=.55 cm, 
    every edge/.style={
        draw,
        postaction={decorate,
                    decoration=
                   }
        }
]
	\vertex[fill] (v_0) at (0, 0){};
	\vertex[fill] (v_1) at (9.00, 0){};
        \vertex[fill] (v_2) at (0, 3.00){};	
        \vertex[fill] (v_3) at (9.00, 3.00){};	
        \vertex[fill] (v_4) at (16.00, 2.00){};	
         \vertex[fill] (v_5) at (21.00, 2.00){};	
          \vertex[fill] (v_6) at (26.00, 2.00){};	
           \vertex[fill] (v_7) at (31.00, 2.00){};

\path

        (v_0) edge node[below]{$C_4$} (v_1)
	(v_0) edge  (v_2)
	(v_2) edge  (v_3) 
	(v_1) edge  (v_3)
	(v_4) edge  (v_5)
	 (v_5) edge node[below]{$L_3$} (v_6)
	(v_6) edge  (v_7)
        ;
\end{tikzpicture}\]
\end{figure}

\end{conjecture}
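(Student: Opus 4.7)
My plan is to establish the conjecture by exploiting the auxiliary equivalent condition (vi), proving the cycle (i) $\Rightarrow$ (vi) $\Rightarrow$ (iii) $\Rightarrow$ (i). The crucial observation is that finite simplicial graphs which contain neither $C_4$ nor $L_3 = P_4$ as an induced subgraph are precisely the \emph{quasi-threshold} (or \emph{trivially perfect}) graphs, which admit a well-known recursive description: any such graph is either a single vertex, a disjoint union of two smaller quasi-threshold graphs, or the join of a single vertex with a smaller quasi-threshold graph. Under the right-angled Artin pro-$p$ functor, disjoint union of graphs corresponds to free pro-$p$ product of groups and join corresponds to direct product; in the present case the second factor of the join is always $\Z_p$.

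For the implication (i) $\Rightarrow$ (vi) $\Rightarrow$ (iii) I would first iterate the recursive structural description to write $G_\Gamma$ as built from copies of $\Z_p$ by the two operations of (vi). Then I would prove that the class of Bloch-Kato pro-$p$ groups is stable under both of these operations. Closure under direct products with $\Z_p$ can be obtained by analysing a closed subgroup $H \leq G \times \Z_p$ through its projection to $\Z_p$: if the image is trivial then $H \leq G$; if the image is open in $\Z_p$ then $H$ is itself a direct product; and in the remaining case one combines the two preceding analyses with the Lyndon-Hochschild-Serre spectral sequence, which degenerates sufficiently in the Bloch-Kato setting to yield quadraticity. Closure under free pro-$p$ products can be extracted from the pro-$p$ Kurosh subgroup theorem, together with the fact that the cohomology algebra of a free pro-$p$ product is (in low degrees) the coproduct of the two factors' cohomology algebras.

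For the implication (iii) $\Rightarrow$ (i) I would argue by contraposition. Assume $\Gamma$ contains an induced subgraph $\Gamma_0 \in \{C_4, L_3\}$; since the inclusion $V(\Gamma_0) \hookrightarrow V(\Gamma)$ and the retraction $V(\Gamma) \to V(\Gamma_0)$ sending extra vertices to $1$ identify $G_{\Gamma_0}$ as a closed retract of $G_\Gamma$, and since the Bloch-Kato property passes to closed subgroups, it suffices to show that neither $G_{C_4}$ nor $G_{L_3}$ is Bloch-Kato. For the $C_4$ case the group is the pro-$p$ direct product $F \mathbin{\hat\times} F$ of two free pro-$p$ groups of rank $2$, and I would exhibit a finitely generated closed subgroup of Stallings-Bieri type (the kernel of the total-degree map to $\Z_p$) whose $\F_p$-cohomology either has $p$-torsion in abelianization or contains degree-$2$ classes not generated by degree-$1$ classes, thereby failing quadraticity. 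For the $L_3$ case I would use the amalgamated free pro-$p$ product decomposition of $G_{L_3}$ along its central edge to produce an analogous non-quadratic subgroup.

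The main obstacle is the reverse implication (iii) $\Rightarrow$ (i): identifying explicit closed subgroups of $G_{C_4}$ and $G_{L_3}$ whose $\F_p$-cohomology fails to be quadratic, and carrying out the cohomological computation rigorously in the pro-$p$ setting, where one cannot always borrow Stallings-Bieri arguments directly from the discrete category. A secondary technical point is verifying that the Bloch-Kato property is preserved under direct products with $\Z_p$, since here the naive Kurosh-style approach is unavailable and one must combine a spectral sequence argument with a careful case analysis of the projection to $\Z_p$.
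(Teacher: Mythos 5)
Your outline for the ``only if'' direction of the combinatorics ((i) $\Rightarrow$ (vi) via the recursive description of $\{C_4,L_3\}$-free graphs) coincides with the paper, which uses Droms' lemma to extract a dominating vertex $v$ and write $G_\Gamma=\langle v\rangle\times G_{\Gamma'}$. But the two steps you yourself flag as obstacles are exactly where the content of the theorem lies, and neither is carried out. First, your proposed proof that the Bloch--Kato class is closed under direct products with $\Z_p$ rests on an incorrect case analysis: a closed subgroup $H\le G\times\Z_p$ whose projection to $\Z_p$ is open is in general \emph{not} a direct product --- it is $(H\cap G)\rtimes\langle(g,\lambda)\rangle$ with the procyclic factor acting on $H\cap G$ by conjugation by some $g\in G$ not necessarily centralizing it (and there is no ``remaining case'', since a closed subgroup of $\Z_p$ is trivial or open). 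The useful projection is the one onto $G$, whose kernel $H\cap(1\times\Z_p)$ is \emph{central}; even then the splitting of the resulting central extension requires torsion-freeness of abelianizations of subgroups, and asserting that the LHS spectral sequence ``degenerates sufficiently to yield quadraticity'' is not an argument. The paper avoids this entirely: it proves (i) $\Rightarrow$ (v) (realizability as $G_K(p)$ for $K$ containing $\mu_p$, via Casella--Quadrelli, using $G_{K((t))}(p)\cong\Z_p\times G_K(p)$ and Efrat's free-product realization) and then deduces Bloch--Kato from Rost--Voevodsky. Its direct subgroup analysis is used only to prove the stronger statement (ii), that every closed subgroup is again a RAAG pro-$p$ group.

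Second, and more seriously, the crux of the whole paper is showing that $G_{L_3}$ is not Bloch--Kato, and your proposal contains no argument for it beyond ``produce an analogous non-quadratic subgroup'' from the amalgam along the central edge. The paper's Theorem 3.1 does this by a specific two-step construction: pass to the index-$p$ subgroup $U=\ker(G_{L_3}\to C_p)$ (sending $x,w^{-1}\mapsto a$ and $y,z\mapsto1$), compute its graph-of-groups decomposition via the open-subgroup theorem for fundamental pro-$p$ groups of graphs of pro-$p$ groups, restrict to a subgraph to obtain $H=\langle z,t,y\mid [z,y]=1=[z^t,y]\rangle$, and invoke the criterion of Min\'a\v{c}--Pasini--Quadrelli--T\^an that a quadratic pro-$p$ group cannot have a relation of the form $[[z,t],y]$ in a minimal presentation. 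Without this (or a worked-out substitute) the hard implication of the conjecture is unproved for graphs containing $L_3$. For $C_4$ your Stallings--Bieri kernel idea is plausible but likewise unexecuted in the pro-$p$ category; the paper simply quotes Quadrelli's earlier theorem that $F_2\times F_2$ is not Bloch--Kato. (A minor point: you label the Bloch--Kato condition (iii), which in the paper's numbering is absolute torsion-freeness; the conjecture concerns (iv).)
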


By  \cite[Theorem~5.6]{Qu14}, the right-angled Artin pro-$p$ group $G_{C_4}$ is not Bloch-Kato; in particular, it does not occur as  the maximal  pro-$p$ Galois group $G_K(p)$ of some field $K$ containing a primitive $p$th root of unity. Quadrelli and Weigel conjectured that $G_{L_3}$ does not occur as  $G_K(p)$ as well (cf.  \cite{CaQu19}); clearly, this would follow  from Conjecture \ref{conj-Bloch}.

\smallskip

The main purpose of this paper is to show that all these conjectures hold. Before stating our main result, we will discuss one more property of a $p$-Sylow subgroup of the absolute Galois group of a field containing all $p$-power roots of unity.  A pro-$p$ group $G$ is called \emph{absolutely torsion free} if for every closed subgroup $H$ of $G$ the abelianization $H^{ab}$ is torsion free.  This property was introduced and studied by W\"urfel in \cite{Wu85}. Free pro-$p$ groups, free abelian pro-$p$ groups and pro-$p$ completions of surface groups are examples of absolutely torsion free pro-$p$ groups. W\"urfel proved that the class of absolutely torsion free pro-$p$ groups is closed under forming inverse limits and free pro-$p$ products (see  \cite[Proposition 3]{Wu85}); moreover, he proved that a direct product of a free abelian pro-$p$ group and an absolutely torsion free pro-$p$ group is absolutely torsion free. One may ask are there any other pro-$p$ groups that are absolutely torsion free. Since the abelianization of every right-angled Artin  pro-$p$ group is torsion free, it is a natural problem to determine which of these groups are absolutely torsion free.

 \smallskip

Finally, we are ready to state the main result of our paper.

\begin{thm}\label{thm:main}
Let $\Gamma$ be a finite simplicial graph and let $G = G_{\Gamma}$ be the  right-angled Artin pro-$p$ group associated to $\Gamma$. Then the following assertions are equivalent:
\begin{itemize}
\item [(i)] No induced subgraph of $\Gamma$ has either of the two forms
\begin{figure}[H]
\[\begin{tikzpicture}[x=.3 cm, y=.55 cm, 
    every edge/.style={
        draw,
        postaction={decorate,
                    decoration=
                   }
        }
]
	\vertex[fill] (v_0) at (0, 0){};
	\vertex[fill] (v_1) at (9.00, 0){};
        \vertex[fill] (v_2) at (0, 3.00){};	
        \vertex[fill] (v_3) at (9.00, 3.00){};	
        \vertex[fill] (v_4) at (16.00, 2.00){};	
         \vertex[fill] (v_5) at (21.00, 2.00){};	
          \vertex[fill] (v_6) at (26.00, 2.00){};	
           \vertex[fill] (v_7) at (31.00, 2.00){};

\path

        (v_0) edge node[below]{$C_4$} (v_1)
	(v_0) edge  (v_2)
	(v_2) edge  (v_3) 
	(v_1) edge  (v_3)
	(v_4) edge  (v_5)
	 (v_5) edge node[below]{$L_3$} (v_6)
	(v_6) edge  (v_7)
        ;
\end{tikzpicture}\]
\end{figure}
\item [(ii)] Every closed subgroup of  $G$ is itself a right-angled Artin pro-$p$ group.
\item [(iii)] $G$ is an absolutely torsion free pro-$p$ group.
\item [(iv)] $G$ is a Bloch-Kato pro-$p$ group.
\item [(v)] $G$ occurs as the maximal  pro-$p$ Galois group $G_K(p)$ of some field $K$ containing a primitive $p$th root of unity.
\item [(vi)]  $G$ can be constructed from $\Z_p$  by iterating two group theoretic operations: direct products with $\Z_p$ and  free pro-$p$ products. 

\end{itemize}
\end{thm}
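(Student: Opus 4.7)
The plan is to close the cycle of implications
\[
\text{(i)} \Rightarrow \text{(vi)} \Rightarrow \text{(v)} \Rightarrow \text{(iv)} \Rightarrow \text{(iii)} \Rightarrow \text{(i)},
\]
together with (i)$\Rightarrow$(ii) and the trivial (ii)$\Rightarrow$(iii). The latter holds because every RAAG pro-$p$ group has torsion-free abelianization, and (v)$\Rightarrow$(iv) is the positive resolution of the Bloch--Kato conjecture recalled in the introduction.

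For (i)$\Rightarrow$(vi), I would first establish the graph-theoretic characterisation that a finite simplicial graph is $\{C_4, L_3\}$-free if and only if it is a \emph{quasi-threshold} (trivially perfect) graph, i.e., it can be built from a single vertex by iterating disjoint union and adjunction of a universal vertex. The proof is a direct induction on $|V(\Gamma)|$: if $\Gamma$ is disconnected, apply induction to each component; if $\Gamma$ is connected and no vertex is adjacent to all others, a shortest obstruction in the ``dominance'' preorder yields an induced $C_4$ or $P_4$. Translating to the group side, disjoint union $\Gamma_1 \sqcup \Gamma_2$ corresponds to the pro-$p$ free product $G_{\Gamma_1} \amalg G_{\Gamma_2}$, and adjoining a universal vertex $v$ to $\Gamma$ gives $G_\Gamma \times \Z_p$, so (vi) follows. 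For (vi)$\Rightarrow$(v), I would realize each building block as a pro-$p$ Galois group: $\Z_p$ occurs as $G_K(p)$ for a suitable field $K$ containing a primitive $p$-th root of unity; the pro-$p$ free product of realizable groups is realizable via an Efrat-type composita of fields; and the direct product with $\Z_p$ arises from adjoining a cyclotomic $\Z_p$-extension. For (iv)$\Rightarrow$(iii), a quadratic pro-$p$ group $G$ has torsion-free abelianization, since any $p$-torsion class in $G^{ab}$ would produce a Bockstein class in $H^2(G,\F_p)$ outside the image of the cup product $H^1 \cup H^1$; the Bloch--Kato hypothesis then ensures this for every closed subgroup.

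The two genuinely new steps are (iii)$\Rightarrow$(i) and (i)$\Rightarrow$(ii). For (iii)$\Rightarrow$(i), I proceed by contrapositive: if $\Gamma$ contains $C_4$ or $L_3$ as an induced subgraph, the standard vertex-killing retraction exhibits $G_{C_4}$ or $G_{L_3}$ as a closed subgroup of $G_\Gamma$, so it suffices to construct, inside one of these two groups, a closed subgroup with $p$-torsion in its abelianization. For $C_4$ I would use a Mihailova-type diagonal subgroup (the kernel of a suitable projection of $G_{C_4}$ onto $\Z_p$ that identifies pairs of generators) and compute its first $\F_p$-homology via the Lyndon--Hochschild--Serre spectral sequence for the product decomposition of $G_{C_4}$; for $L_3$ I would use the amalgamated pro-$p$ structure $\langle a,b\rangle *_{\langle b\rangle}\langle b,c\rangle *_{\langle c\rangle}\langle c,d\rangle$ of $G_{L_3}$ and extract an analogous commutator-type subgroup via the action on the associated pro-$p$ tree of groups.

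For (i)$\Rightarrow$(ii), I would argue by induction on $|V(\Gamma)|$ along the decomposition supplied by (vi). In the free pro-$p$ product case $G_{\Gamma_1 \sqcup \Gamma_2}$, the pro-$p$ Kurosh-type subgroup theorem (Mel'nikov, Haran--Ribes) writes any closed subgroup as a free pro-$p$ product of conjugates of subgroups of the factors together with a free pro-$p$ group, and by induction this is again an (infinitely generated) RAAG pro-$p$ group. The harder case is $G = G_{\Gamma'} \times \Z_p$: any closed subgroup $K$ either lies in $G_{\Gamma'}$ (apply induction) or sits in a short exact sequence $K \cap G_{\Gamma'} \hookrightarrow K \twoheadrightarrow \Z_p$ splitting as a semidirect product whose $\Z_p$-action is conjugation by some element $g_0 \in G_{\Gamma'}$. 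The principal obstacle, and where I expect the technical bulk of the proof to reside, is to realize this hybrid as a RAAG pro-$p$ group: the plan is to exploit the quasi-threshold structure of $\Gamma'$ to absorb $g_0$ into $K \cap G_{\Gamma'}$ and reduce the semidirect product to a direct product with $\Z_p$ compatible with the inductively obtained RAAG structure on $K \cap G_{\Gamma'}$, but carrying this reduction through while preserving the RAAG presentation at each stage is the main technical challenge.
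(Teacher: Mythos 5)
Your proposed step (iv)$\Rightarrow$(iii) rests on a false lemma: a quadratic (indeed Bloch--Kato) pro-$p$ group need not have torsion-free abelianization. Any Demushkin group with invariant $q\neq 0$, e.g.\ $G_{\Q_p(\mu_p)}(p)=\langle x_1,\dots,x_d\mid x_1^{p}[x_1,x_2]\cdots[x_{d-1},x_d]\rangle$, is a maximal pro-$p$ Galois group over a field containing $\mu_p$, hence Bloch--Kato by Rost--Voevodsky, yet its abelianization is $\Z/p\oplus\Z_p^{d-1}$. The Bockstein class attached to the torsion is nonzero but it \emph{does} lie in the image of the cup product (here $H^2$ is one-dimensional and spanned by cup products), so the obstruction you describe does not exist. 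This breaks your cycle: with (iv)$\Rightarrow$(iii) gone, condition (iv) implies nothing else in your scheme. The paper instead proves (iv)$\Rightarrow$(i) by contrapositive, and for the $L_3$ case this requires exhibiting a concrete non-quadratic subgroup of $G_{L_3}$: an index-$p$ subgroup computed via the structure theorem for open subgroups of fundamental groups of graphs of pro-$p$ groups, containing $H=\langle z,t,y\mid [z,y]=[z^t,y]=1\rangle$, whose relation $[[z,t],y]=1$ cannot occur in a minimal presentation of a quadratic group. Likewise your (iii)$\Rightarrow$(i) for $L_3$ is only a gesture; the paper's explicit subgroup $V$ with $V^{\mathrm{ab}}\cong\Z_p^3\oplus C_p$ (built from $H$ as an HNN-extension over an index-$p$ subgroup of its base) is the technical heart of the whole argument and is absent from your sketch. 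For $C_4$ one can simply quote W\"urfel for the failure of absolute torsion-freeness of $F_2\times F_2$ and Quadrelli for the failure of Bloch--Kato, rather than redo a Mihailova-type computation.

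In (i)$\Rightarrow$(ii) you project onto the wrong factor. Writing $G=G_{\Gamma'}\times\langle v\rangle$ with $v$ the universal vertex and mapping a closed subgroup $K$ onto its image in $\langle v\rangle\cong\Z_p$ produces a semidirect product with an action by conjugation by some $g_0\in G_{\Gamma'}$, and the difficulty you flag in absorbing $g_0$ is real --- but it is an artifact of this setup. Project instead onto $G_{\Gamma'}$: consider $K\cap\langle v\rangle\hookrightarrow K\twoheadrightarrow\varphi(K)$ where $\varphi$ kills $v$. The kernel is \emph{central} in $K$, the quotient $\varphi(K)$ is a right-angled Artin pro-$p$ group $G_\Delta$ by induction, and a section is obtained by lifting each vertex $u$ of $\Delta$ to an element $uv^{\alpha_u}\in K$; centrality of $v$ guarantees these lifts satisfy the same commutation relations, so $K=(K\cap\langle v\rangle)\times G_\Delta$ is a genuine direct product and hence the right-angled Artin pro-$p$ group on the cone over $\Delta$. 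With this change your ``main technical challenge'' disappears. The remainder of your outline --- the quasi-threshold characterization of $\{C_4,L_3\}$-free graphs, the pro-$p$ Kurosh argument in the disconnected case, (ii)$\Rightarrow$(iii), (v)$\Rightarrow$(iv), and (vi)$\Rightarrow$(v) via $G_{K((t))}(p)\cong\Z_p\times G_K(p)$ together with Efrat's realization of free pro-$p$ products --- matches the paper.
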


Recently, in \cite{MiPaPaTa18}, Min\'a\v{c} et al.  conjectured that the  $\F_p$-cohomology algebra $H^\bullet(G_K(p),\F_p) = \bigoplus_{n\geq0}H^n(G_K(p),\F_p)$ of $G_K(p)$ is universally Koszul (see \cite{MiPaPaTa18}  for the definition and related notions), whenever  $K$ is a field containing a primitive $p$th root of unity. Universal Koszulity is a very strong property.    Cassella and Quadrelli proved that the  $\F_p$-cohomology algebra $H^\bullet(G_{\Gamma},\F_p)$ of a right-angled Artin pro-$p$ group associated to a finite graph $\Gamma$ is universally Koszul if and only if no induced subgraph of $\Gamma$ is a square $C_4$ or a line with four vertices $L_3$ (cf. \cite[Theorem 5.2]{CaQu19}).  This  result together with Theorem \ref{thm:main} imply that one can not find a counterexample to the conjecture of Min\'a\v{c} et al.  among right-angled Artin pro-$p$ groups.

\begin{cor}
Let $\Gamma$ be a finite simplicial graph. Then $G_{\Gamma}$ occurs as the maximal  pro-$p$ Galois group $G_K(p)$ of some field $K$ containing a primitive $p$th root of unity if and only if the $\F_p$-cohomology algebra $H^\bullet(G_{\Gamma},\F_p)$  is universally Koszul.
\end{cor}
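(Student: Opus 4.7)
The plan is to observe that this corollary follows immediately by chaining two biconditional characterizations, each controlled by the same combinatorial condition on $\Gamma$, namely the absence of $C_4$ and $L_3$ as induced subgraphs. The first half is supplied by Theorem \ref{thm:main}: the equivalence (i) $\Leftrightarrow$ (v) says that $G_{\Gamma}$ arises as $G_K(p)$ for some field $K$ containing a primitive $p$th root of unity precisely when $\Gamma$ avoids $C_4$ and $L_3$ as induced subgraphs. The second half is exactly \cite[Theorem 5.2]{CaQu19} of Cassella and Quadrelli, which asserts that $H^\bullet(G_{\Gamma},\F_p)$ is universally Koszul under precisely the same graph-theoretic condition.

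So the entire proof will consist of a single sentence (or display) stitching these two equivalences: $G_\Gamma \cong G_K(p)$ for some $K \ni \zeta_p$ iff $\Gamma$ has no induced $C_4$ or $L_3$ (by Theorem \ref{thm:main}), iff $H^\bullet(G_\Gamma, \F_p)$ is universally Koszul (by \cite[Theorem 5.2]{CaQu19}). No obstacle arises — the work is entirely absorbed into Theorem \ref{thm:main} (in particular, the hard direction (i) $\Rightarrow$ (v), realizing such $G_\Gamma$ as a maximal pro-$p$ Galois group) and into the cited Cassella–Quadrelli theorem. The only thing to verify is that the combinatorial condition in \cite[Theorem 5.2]{CaQu19} is stated in exactly the form ``no induced $C_4$ and no induced $L_3$,'' which it is, so no translation of hypotheses is required.
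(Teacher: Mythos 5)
Your proposal matches the paper's (implicit) argument exactly: the corollary is obtained by chaining the equivalence (i) $\Leftrightarrow$ (v) of Theorem \ref{thm:main} with the Cassella--Quadrelli result \cite[Theorem 5.2]{CaQu19}, which characterizes universal Koszulity of $H^\bullet(G_{\Gamma},\F_p)$ by the same condition of having no induced $C_4$ or $L_3$. Nothing further is needed.
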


A  \emph{$p$-oriented profinite group} is a profinite  group $G$ with a continuous homomorphism $ \theta: G  \to  \Z_p^{ \times}$, where $ \Z_p^{ \times}$ is the group of units of $ \Z_p.$  Given a $p$-oriented profinite group $(G,   \theta)$, let  $\Z_p(1)$ be the  (left) $ \Z_p{ [\![G ]\!]}$-module defined in the following way: $\Z_p(1)$ is equal to the additive group of $ \Z_p$ and the $G$-action is given by
 \begin{equation}\label{action}
  g  \cdot z =  {\theta(g)}  \cdot z,  \textrm{ for } g \in G \textrm{ and } z  \in \Z_p(1).
  \end{equation}
  Here  $ \Z_p{ [\![G ]\!]}$ denotes the completed $ \Z_p$-group algebra of $G$. 
  

 A  $p$-oriented profinite group $(G,   \theta)$ is called $1$- \emph{smooth} ($1$-\emph{cyclotomic} in  \cite{QuWe18}), if 
  it satisfies  one of the following equivalent conditions (see  \cite[Def. 14.18]{CleFlo17} and  \cite[Fact 2.1]{QuWe18} ):
  \begin{itemize}
  \item [(i)] $H^{2}_{cts}(U,\Z_p(1))$ is a torsion free abelian group for every open subgroup $U$ of $G$;
  \item [(ii)] the natural epimorphism $\Z_p(1)  \twoheadrightarrow {\Z_p(1)}/p{\Z_p(1)}$ induces an epimorphism 
  $$H^{1}_{cts}(U,\Z_p(1)) \twoheadrightarrow H^{1}(U, {\Z_p(1)}/p{\Z_p(1)})$$
  for every open subgroup $U$ of $G.$
   \end{itemize}
 Here $H^{1}_{cts}(G, -)$ denotes continuous co-chain cohomology as introduced by Tate in \cite{Ta76}.

In \cite{CleFlo17}, De Clercq and Florens stated the following conjecture, which is known as the Smoothness Conjecture.

\begin{conjecture} \cite[Conj. 14.25]{CleFlo17}\label{smoothness-conj}
If a profinite group $G$ can be equipped with a continuous homomorphism $ \theta: G  \to  \Z_p^{ \times}$ such that $(G,  \theta)$ is $1$-smooth, then $G$ has the weak Bloch-Kato property. 
\end{conjecture}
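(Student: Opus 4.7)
The goal is to deduce the weak Bloch--Kato property for $G$---quadraticity of $H^\bullet(U,\F_p)$ for every open subgroup $U\leq G$---from the purely cohomological hypothesis that $H^2_{cts}(U,\Z_p(1))$ is torsion free on every such $U$. I would proceed in three stages. First, I would reduce to the case in which $G$ is a pro-$p$ group: since $\Z_p(1)$ is $p$-complete, every continuous cochain cohomology group appearing in the definition of $1$-smoothness is computed by the maximal pro-$p$ quotient $G(p)$ with the induced orientation, and a standard inflation--restriction argument identifies the weak Bloch--Kato property of $G$ with that of $G(p)$. After (if necessary) restricting $\theta$ to the open subgroup $\theta^{-1}(1+p\Z_p)$, the action on $\F_p = \Z_p(1)/p\Z_p(1)$ becomes trivial, so one is reduced to the case of $G(p)$ pro-$p$ with $\theta$ landing in $1+p\Z_p$.

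Second, I would extract the multiplicative content of $1$-smoothness via the Bockstein sequence. Multiplication by $p$ on $\Z_p(1)$ gives the short exact sequence $0\to \Z_p(1)\to\Z_p(1)\to \F_p\to 0$, whose long exact sequence shows that torsion-freeness of $H^2_{cts}(U,\Z_p(1))$ is equivalent to condition (ii) of the definition. The technical lemma I would aim for here is a \emph{Kummer-type lifting statement}: every cup product in $H^2(U,\F_p)$ is, up to a correction expressible through $\theta$, the mod-$p$ reduction of a class in $H^2_{cts}(U,\Z_p(1))$ coming from a characteristic-zero Kummer pair. This converts the hypothesis into a concrete multiplicative condition on $H^\bullet(U,\F_p)$ and feeds the inductive argument in degree three and higher.

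Third, using this lifting I would prove quadraticity by induction on the cohomological degree. Degree-two surjectivity of $\Lambda^2 H^1(U,\F_p)\to H^2(U,\F_p)$ follows by pairing the Kummer lifting with torsion-freeness in degree three; for $n\geq 3$, any cup product of degree-one classes can be rewritten, via the relations obtained in degree two, as an element in the image of the exterior map. The main obstacle, which I expect to be genuinely hard, is the degree-two step. Without a field realization of $(G,\theta)$ one cannot invoke the Norm Residue Theorem of Rost--Voevodsky, so a purely group-theoretic substitute must be produced---running the induction simultaneously over all open subgroups so that restriction, corestriction and Lyndon--Hochschild--Serre can be combined with $1$-smoothness at arbitrary levels. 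The alternative---realizing every $1$-smooth $(G,\theta)$ as a maximal pro-$p$ Galois group---would be more powerful but appears out of reach in general, and is precisely what the present paper achieves in the special case of right-angled Artin pro-$p$ groups.
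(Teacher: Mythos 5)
The statement you were asked about is a \emph{conjecture} (De Clercq--Florence, Conj.~14.25), not a theorem of the paper: the authors do not prove it, and no proof is currently known. The paper only verifies the conjecture in the special case of right-angled Artin pro-$p$ groups (Theorem~\ref{smoothness-thm}), and it does so by a route entirely unlike yours: rather than deducing quadraticity cohomologically from $1$-smoothness, it shows directly (Lemma~\ref{non-smooth}) that if $\Gamma$ contains $C_4$ or $L_3$ as an induced subgraph then \emph{no} orientation $\theta$ makes $G_\Gamma$ $1$-smooth --- the argument passes to the quotient $G/K(\mathcal{G})$, which is hereditarily uniform when $\theta\neq 1$, and derives a contradiction from centralizers of elements outside $\mathrm{Ker}(\theta)$ (for $C_4$) or from the relation $(z^x)^p=(z^p)^t$ in the subgroup $V$ (for $L_3$); the case $\theta=1$ is handled by Lemma~\ref{smooth-abs-tor-free}, which identifies $1$-smoothness of $(G,1)$ with absolute torsion-freeness. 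Combined with Theorem~\ref{thm:main}, this makes $1$-smoothness for some $\theta$ equivalent to the Bloch--Kato property for these groups.

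Your proposal, by contrast, attempts the general conjecture and does not close. You yourself flag the decisive gap: the degree-two surjectivity of $\Lambda^2 H^1(U,\F_p)\to H^2(U,\F_p)$, for which you offer no argument beyond the hope that a ``purely group-theoretic substitute'' for the Norm Residue Theorem can be produced. The ``Kummer-type lifting statement'' in your second stage is likewise asserted rather than proved, and it is not clear that $1$-smoothness --- a statement about torsion in $H^2_{cts}(U,\Z_p(1))$ --- controls cup products of degree-one classes in the way you need; indeed, the known content of $1$-smoothness in low degree is essentially the Kummerian property of open subgroups (cf.\ \cite[Theorem~7.1]{EfQu19}), which is an abelianization condition and falls well short of quadraticity. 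Even your first reduction step (passing to the maximal pro-$p$ quotient and trivializing the action by restricting to $\theta^{-1}(1+p\Z_p)$) would need care, since the weak Bloch--Kato property is a statement about all open subgroups of the original profinite group. In short: what you have is a plausible research programme with its hardest step left open, not a proof, and it should not be presented as one.
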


As claimed by the authors, the work in  \cite{CleFlo17} is motivated by the search for an ``explicit'' proof of the Bloch-Kato conjecture in Galois cohomology. For the definition of the weak Bloch-Kato property see \cite[Def. 14.23]{CleFlo17}. Here we only remark that every Bloch-Kato pro-$p$ group has the weak Bloch-Kato property. Our next result, which essentially follows from Theorem \ref{thm:main}, shows that the Smoothness Conjecture holds for right-angled Artin pro-$p$ groups.

\begin{thm}\label{smoothness-thm}
Let $\Gamma$ be a finite simplicial graph and let $G = G_{\Gamma}$ be the  right-angled Artin pro-$p$ group associated to $\Gamma$. Then $G$ can be equipped with a continuous homomorphism $ \theta: G  \to  \Z_p^{ \times}$ such that $(G,  \theta)$ is $1$-smooth   if and only if $G$ is a Bloch-Kato pro-$p$ group.
\end{thm}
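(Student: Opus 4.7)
The plan is to prove the two directions separately, leveraging the equivalences of Theorem~\ref{thm:main}. For the direction ``$G$ Bloch--Kato $\Rightarrow$ a $1$-smooth orientation exists'', the cleanest route is via Theorem~\ref{thm:main}(v): if $G$ is Bloch--Kato then $G \cong G_K(p)$ for some field $K$ containing a primitive $p$-th root of unity, and the cyclotomic character $\theta_{\mathrm{cyc}} \colon G \to \Z_p^{\times}$ (recording the Galois action on $p$-power roots of unity) provides the orientation. That $(G_K(p), \theta_{\mathrm{cyc}})$ is $1$-smooth is by now a standard consequence of the Rost--Voevodsky theorem: for every open $U \leq G_K(p)$, corresponding to some intermediate field $L$, Hilbert~90 and the norm residue isomorphism together imply torsion-freeness of $H^{2}_{cts}(U,\Z_p(1))$. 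An alternative route, bypassing the heavy machinery, would use the constructive description~(vi): verify inductively that the trivial orientation is $1$-smooth on $\Z_p$ (cohomological dimension $1$), and that $1$-smoothness is preserved under direct products with $\Z_p$ and under free pro-$p$ products, via K\"unneth and a Mayer--Vietoris-type argument respectively.

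For the converse, I proceed by contraposition. Assume $G_\Gamma$ is not Bloch--Kato; then Theorem~\ref{thm:main}(i) supplies an induced subgraph $\Delta \subseteq \Gamma$ isomorphic to $C_4$ or $L_3$. The natural inclusion of generators $\iota \colon G_\Delta \hookrightarrow G_\Gamma$ and the natural projection $\pi \colon G_\Gamma \twoheadrightarrow G_\Delta$ (killing the vertices outside $V(\Delta)$) satisfy $\pi \circ \iota = \mathrm{id}_{G_\Delta}$, so $G_\Delta$ is a retract of $G_\Gamma$. Given any orientation $\theta$ on $G_\Gamma$, set $\theta_\Delta := \theta \circ \iota$. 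The key intermediate claim is that if $(G_\Gamma, \theta)$ is $1$-smooth, then $(G_\Delta, \theta_\Delta)$ is $1$-smooth. For each open $V \leq G_\Delta$, put $U := \pi^{-1}(V)$, an open subgroup of $G_\Gamma$ of which $V$ is a retract via $\pi|_U$ and $\iota$. Since $\iota^{\ast} \Z_p(1)_{\theta} \cong \Z_p(1)_{\theta_\Delta}$ as $V$-modules (because $\pi \iota = \mathrm{id}$), the restriction $\iota^{\ast}$ on cohomology is split by $\pi^{\ast}$, so $H^{2}_{cts}(V, \Z_p(1)_{\theta_\Delta})$ embeds as a direct summand of $H^{2}_{cts}(U, \Z_p(1)_{\theta})$; torsion-freeness of the latter then forces torsion-freeness of the former.

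It remains to show that neither $G_{C_4}$ nor $G_{L_3}$ admits any $1$-smooth orientation. For $G_{C_4} \cong F(a,c) \times F(b,d)$, any orientation factors through $G_{C_4}^{ab} = \Z_p^{4}$; passing to the open subgroup on which the orientation lies in $1+p\Z_p$ reduces to essentially trivial action modulo $p$. A direct K\"unneth computation then produces a non-zero cup product in $H^{2}(U, \F_p(1))$ which fails to lift to $H^{1}(U, \Z_p(1))$, exhibiting $p$-torsion in $H^{2}(U, \Z_p(1))$ and contradicting $1$-smoothness; this essentially reprises the obstruction of \cite[Thm.~5.6]{Qu14}. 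The case $L_3$ is handled analogously using its decomposition as a free pro-$p$ amalgamated product over $\Z_p^{2}$ (arising from the middle edge) together with the associated Mayer--Vietoris sequence. I expect the principal obstacle to be this final cohomological computation: the obstructing torsion class must be produced uniformly over \emph{all} orientations, which requires care in choosing the witnessing open subgroup. The retract-inheritance step, although technically delicate, is largely formal.
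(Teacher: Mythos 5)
Your overall architecture (reduce to $C_4$ and $L_3$, then show neither admits a $1$-smooth orientation) matches the paper's, and your forward direction, while valid, is heavier than necessary: the paper simply observes that Bloch--Kato is equivalent to absolute torsion-freeness by Theorem~\ref{thm:main}, and that the \emph{trivial} orientation $(G,1)$ is $1$-smooth precisely when $G$ is absolutely torsion free (Lemma~\ref{smooth-abs-tor-free}), so no appeal to the realization as $G_K(p)$ or to the cyclotomic character is needed. The converse, however, has two genuine gaps. First, your retract-inheritance step is not as formal as claimed: the candidate splitting $\pi^{*}$ of $\iota^{*}$ maps $H^{2}_{cts}(V,\Z_p(1)_{\theta_\Delta})$ into $H^{2}_{cts}(U,\Z_p(1)_{\theta_\Delta\circ\pi})$, and the coefficient module $\Z_p(1)_{\theta_\Delta\circ\pi}$ need not agree with $\Z_p(1)_{\theta}$ on $U$ (the orientation $\theta$ may be non-trivial on vertices outside $\Delta$), so the ``direct summand'' conclusion does not follow as stated. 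The paper instead proves the stronger and cleaner fact that $1$-smoothness passes to \emph{every} closed subgroup, via $H^{2}(C,\theta_{\mid C})\simeq\varinjlim_{U\supseteq C}H^{2}(U,\theta_{\mid U})$; this stronger reduction is not optional, as explained next.

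Second, and more seriously, the core non-existence claims for $G_{C_4}$ and $G_{L_3}$ are exactly where the paper does real work (Lemma~\ref{non-smooth}), and your sketched computations would not deliver them. You propose to locate the obstructing torsion in an \emph{open} subgroup $U$ of $G_{C_4}$ or $G_{L_3}$; but for the trivial orientation, torsion in $H^{2}(U,\Z_p)$ is equivalent to torsion in $U^{ab}$, and the only witnesses produced in this paper (the closed subgroup of $F_2\times F_2$ behind \cite[Theorem~5.6]{Qu14} and \cite[Proposition 4]{Wu85}, and the subgroup $V$ of Theorem~\ref{absolutely-torsion-free}) are closed of \emph{infinite} index, not open. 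So the obstruction must be sought in closed subgroups, which your framework does not reach. Moreover, for non-trivial $\theta$ the module $\Z_p(1)$ is non-trivial and neither the K\"unneth argument for $F_2\times F_2$ nor a Mayer--Vietoris computation over the middle edge of $L_3$ is a ``reprise'' of the quadraticity obstruction; the paper avoids cohomological computation altogether here, arguing instead that $1$-smoothness forces every open subgroup to be Kummerian \cite{EfQu19}, so that $G/K(\mathcal{G})$ is hereditarily uniform, and then deriving a contradiction from centralizers (for $C_4$) or from the image of the relation $(z^x)^p=(z^p)^t$ in the uniform quotient of $V$ (for $L_3$). The step you flag as ``the principal obstacle'' is indeed the heart of the theorem, and as proposed it is not carried out.
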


Recall that a pro-$p$ group $G$ is  of \emph{(homological)  type}  $FP_{\infty}$ if the trivial pro-$p$  $\Z_p[[G]]$-module $\Z_p$ has a
 pro-$p$ projective resolution with all $\Z_p[[G]]$-modules finitely generated; equivalently, if the Galois cohomology groups $H^i(G, \F_p)$ are finite for each $i \geq 0$ (see \cite[Theorem1.6]{Ple80}). A pro-$p$ group $G$ is called  \emph{coherent} if each of its finitely generated closed subgroups is finitely presented. Our final result characterizes  right-angled Artin pro-$p$ groups with these two properties.
 
 \begin{thm}\label{coherence}
Let $\Gamma$ be a finite simplicial graph and let $G = G_{\Gamma}$ be the  right-angled Artin pro-$p$ group associated to $\Gamma$. Then the following assertions are equivalent: 
\begin{itemize}
\item [(i)] Each circuit of  $\Gamma$ of length greater than three has a chord.
\item [(ii)] $G$ is coherent.
\item [(iii)]  Every finitely generated subgroup of $G$ is of type $FP_{\infty}.$  
\end{itemize}
\end{thm}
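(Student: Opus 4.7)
The implication (iii) $\Rightarrow$ (ii) is immediate, since type $FP_{\infty}$ entails finite presentability.

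For (ii) $\Rightarrow$ (i) I would argue by contrapositive. If $\Gamma$ contains an induced cycle $C_n$ with $n \ge 4$, then the obvious homomorphism $G_{\Gamma} \to G_{C_n}$ (identity on $V(C_n)$, trivial on the remaining generators) is a pro-$p$ retraction, so $G_{C_n}$ is a closed subgroup of $G_{\Gamma}$ and coherence of $G_{\Gamma}$ would force coherence of $G_{C_n}$. It thus suffices to exhibit, for each $n \ge 4$, a topologically finitely generated closed subgroup of $G_{C_n}$ that is not finitely presented. The natural candidate is the pro-$p$ Bestvina--Brady subgroup $N = \kernel \phi$, where $\phi : G_{C_n} \to \Z_p$ sends every vertex generator to $1$. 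Topological finite generation of $N$ should follow from the connectedness of $C_n$ (viewed as its own flag complex) via a pro-$p$ analog of the Bestvina--Brady Morse-theoretic argument. To preclude finite presentability, I would apply the Lyndon--Hochschild--Serre spectral sequence to $1 \to N \to G_{C_n} \to \Z_p \to 1$: since $\Z_p$ has cohomological dimension $1$ and $H^{\bullet}(G_{C_n},\F_p)$ is the exterior Stanley--Reisner algebra of $C_n$, the five-term sequence together with the nontrivial edge cohomology in degree $2$ (which reflects the failure of simple connectivity of $C_n$) forces $\dim_{\F_p} H^2(N,\F_p) = \infty$, ruling out finite presentability.

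For (i) $\Rightarrow$ (iii) I would induct on $|V(\Gamma)|$. If $\Gamma$ is a clique, then $G_{\Gamma}$ is free abelian pro-$p$ of finite rank and every closed subgroup is of type $FP_{\infty}$. If $\Gamma$ is disconnected, $G_{\Gamma}$ is a pro-$p$ free product of smaller chordal pieces. Otherwise, every connected chordal graph that is not a clique admits a clique separator $K \subsetneq V(\Gamma)$, giving a decomposition $\Gamma = \Gamma_1 \cup_K \Gamma_2$ with both $\Gamma_i$ chordal and strictly smaller, and a corresponding amalgamated pro-$p$ product $G_{\Gamma} \cong G_{\Gamma_1} *_{G_K} G_{\Gamma_2}$ with $G_K \cong \Z_p^{|K|}$. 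The induction then reduces to the following \emph{Key Lemma}: if $A$ and $B$ are pro-$p$ groups in which every finitely generated closed subgroup is of type $FP_{\infty}$, and $C \cong \Z_p^k$ is a common free abelian pro-$p$ subgroup of finite rank, then every finitely generated closed subgroup of $A *_C B$ (and of the pro-$p$ free product $A \amalg B$) is of type $FP_{\infty}$.

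To establish the Key Lemma I would invoke pro-$p$ Bass--Serre theory in the sense of Ribes--Zalesskii: $G = A *_C B$ acts on a standard pro-$p$ tree $T$ with vertex stabilizers conjugate to $A, B$ and edge stabilizers conjugate to $C$. For a finitely generated closed $H \le G$, I would use Noetherianity of the completed $\Z_p$-group algebra of $C \cong \Z_p^k$ together with a pro-$p$ Kurosh-type analysis of the minimal $H$-invariant subtree to express $H$ as the pro-$p$ fundamental group of a finite graph of pro-$p$ groups whose vertex groups $H \cap gAg^{-1}$, $H \cap gBg^{-1}$ are $FP_{\infty}$ by hypothesis and whose edge groups $H \cap gCg^{-1}$ are free abelian pro-$p$ (hence also $FP_{\infty}$); Mayer--Vietoris in pro-$p$ cohomology then delivers $FP_{\infty}$ for $H$. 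The main obstacle is the finiteness of this underlying graph: topological finite generation of $H$ does not automatically produce a finite quotient graph in the pro-$p$ setting, and I would exploit the abelian, Noetherian nature of the edge group $C$ (so that chains of conjugate intersections stabilize) in combination with the $FP_{\infty}$ hypothesis on the vertex groups to control the edge orbits. A fallback, should the direct route resist, is to prove a pro-$p$ analog of Droms' subgroup theorem, namely that every finitely generated closed subgroup of a chordal right-angled Artin pro-$p$ group is again a right-angled Artin pro-$p$ group on a chordal graph, from which (iii) is immediate.
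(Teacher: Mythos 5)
Your overall architecture (Bestvina--Brady kernel for the non-coherence direction, clique-separator induction for the positive direction) points in the right direction, but both of the two substantive implications rest on steps that are not actually established, and in each case the paper takes a different, concrete route around exactly the difficulty you flag.

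For (i) $\Rightarrow$ (iii), your entire argument reduces to the ``Key Lemma,'' and the obstacle you yourself identify --- that a topologically finitely generated closed subgroup of a pro-$p$ amalgam need not act on the standard pro-$p$ tree with finitely many edge orbits or tractable edge stabilizers --- is genuine and is not removed by Noetherianity of $\Z_p[[C]]$. Indeed, the paper's own Proposition~\ref{not-coherent} exhibits the phenomenon: $K_\Gamma$ is a finitely generated closed subgroup of $G_{T_1}\amalg_{F(v,w)}G_{T_2}$ whose intersection with the edge group $F(v,w)$ is a free pro-$p$ group of \emph{infinite} rank, so intersections with conjugates of the (even procyclic-by-nice) edge group do not ``stabilize.'' The paper sidesteps Bass--Serre theory for arbitrary finitely generated subgroups entirely: Lemma~\ref{lemma-chordal} shows by induction on Dirac clique separators that $[G_\Gamma,G_\Gamma]$ is a free pro-$p$ group (the key point being $[G_\Gamma,G_\Gamma]\cap G_\Lambda=[G_\Lambda,G_\Lambda]=1$ for the complete separator $\Lambda$, so the commutator subgroup meets edge stabilizers trivially and the pro-$p$ Kurosh theorem applies). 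Hence every finitely generated subgroup $H$ is free-by-abelian, and King's criterion finishes: with $F\trianglelefteq H$ free and $Q=H/F$ abelian, $H$ is $FP_\infty$ iff each $H_i(F,\Z_p)$ is a finitely generated $\Z_p[[Q]]$-module, which holds because $H_1(F,\Z_p)$ is finitely generated over $\Z_p[[Q]]$ (as $H$ is finitely generated) and $H_i(F,\Z_p)=0$ for $i>1$. Your fallback (finitely generated closed subgroups of chordal pro-$p$ RAAGs are again chordal pro-$p$ RAAGs) is likewise unproved and is not what Droms' coherence theorem says even in the abstract case.

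For (ii) $\Rightarrow$ (i), the reduction to $C_n$ via retraction is fine and the choice of $N=\ker(G_{C_n}\to\Z_p)$ is exactly the paper's $K_\Gamma$, but both of your justifications have holes. First, there is no off-the-shelf ``pro-$p$ Bestvina--Brady Morse theory'' giving finite generation of $N$; the paper obtains it by splitting $C_n=T_1\cup T_2$ along two non-adjacent vertices and proving (Lemmas \ref{lemma-chordal} and \ref{tree}) that $K_\Gamma=K_{T_1}\amalg_{K_{\{v,w\}}}K_{T_2}$ with $K_{T_1},K_{T_2}$ free pro-$p$ of finite rank, so $K_\Gamma$ is generated by finitely many elements. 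Second, your spectral-sequence step is inverted: since $\cd(\Z_p)=1$, the Lyndon--Hochschild--Serre spectral sequence for $1\to N\to G_{C_n}\to\Z_p\to1$ has two columns and yields a surjection $H^2(G_{C_n},\F_p)\twoheadrightarrow H^0\bigl(\Z_p,H^2(N,\F_p)\bigr)$; the finiteness of $H^2(G_{C_n},\F_p)$ therefore only bounds the $\Z_p$-invariants of $H^2(N,\F_p)$, and an infinite discrete module over a pro-$p$ group can perfectly well have finite invariants, so nothing in the five-term sequence forces $\dim_{\F_p}H^2(N,\F_p)=\infty$. The paper instead runs Mayer--Vietoris for the amalgam over $K_{\{v,w\}}$ (the normal closure of $v^{-1}w$ in $F(v,w)$, free of infinite rank): $H^1$ of the edge group is infinite while $H^1\oplus H^2$ of the vertex groups is finite, which forces $H^2(K_\Gamma,\F_p)$ infinite and hence $K_\Gamma$ not finitely presented. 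Your (iii) $\Rightarrow$ (ii) is correct, but only because for pro-$p$ groups finite presentability is equivalent to $FP_2$ (finiteness of $H^2(-,\F_p)$); this is worth saying explicitly, since it fails for abstract groups and is precisely why (ii) and (iii) are inequivalent for abstract RAAGs.
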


 For (abstract) right-angled Artin groups, the equivalence of $(i)$ and $(ii)$ was proved by Droms in \cite{Dr87c}, while $(ii)$ and $(iii)$ are not equivalent.  In \cite{BeBra07},  Bestvina and Brady gave examples of abstract groups of type $FP_{\infty}$  that are not finitely presented, solving a longstanding open problem;  indeed,  their examples are subgroups of some right-angled Artin groups.
 
\section{Preliminaries on Bass-Serre theory for pro-$p$ groups}\label{preliminaries}

 In this section  we collect results from the theory of pro-$p$ groups acting on pro-$p$ trees that will be used in the paper. Throughout the paper, all groups are pro-$p$ groups, subgroups are tacitly taken to be closed, homomorphisms are continuous and by generators we mean topological generators as appropriate. 
 
 \smallskip
 
We start with some definitions, following \cite{horizons}.

\smallskip

A {\it profinite graph} is a triple $(\Gamma, d_0, d_1)$, where
$\Gamma$ is a boolean space and $d_0, d_1: \Gamma \to \Gamma $ are
continuous maps such that $d_id_j=d_j$ for $i, j \in \{0, 1 \}$.
The elements of $V(\Gamma):=d_0(\Gamma)\cup d_1(\Gamma)$ are called the
{\it vertices} of $\Gamma$ and the elements of
$E(\Gamma):=\Gamma-V(\Gamma)$ are called the {\it edges} of
$\Gamma$. If $e\in E(\Gamma)$, then $d_0(e)$ and $d_1(e)$ are
called the initial and terminal vertices of $e$. If there is no
confusion, one can just write $\Gamma$ instead of $(\Gamma, d_0,
d_1)$. 

A {\em morphism} $f:\Gamma \to \Delta$  of profinite graphs is a map $f$ which
commutes with the $d_i$'s. Thus it sends vertices to vertices, but
might send an edge to a vertex. 

Every profinite graph $\Gamma$ can be represented as an inverse limit $\Gamma=\varprojlim \Gamma_i$ of its finite quotient graphs (\cite[Proposition 1.5]{horizons}).

A profinite graph $\Gamma$ is said to be {\it connected} if all
its finite quotient graphs are connected. Every profinite graph is
an abstract graph, but in general a connected profinite graph is
not necessarily connected as an abstract graph.

\medskip
\begin{definition} Let $\Gamma$ be a profinite graph without loops. A pro-$p$ group $G_{\Gamma}$ given by the pro-$p$ presentation $G_{\Gamma}=\langle v\in V(\Gamma)\mid [d_0(e),d_1(e)]=1, e\in E(\Gamma)\rangle$ is called a  \emph{right-angled Artin pro-$p$ group}. 
\end{definition}

If $\Gamma$ does not have edges, then clearly $G_{\Gamma}$ is a free pro-$p$ group $F(V(\Gamma))$ on the space of vertices. If $\Gamma$ is finite disconnected, then $G_{\Gamma}$ is a free product of right-angled Artin pro-$p$ groups associated to its connected components.

\medskip
Let $(E^*(\Gamma), *)=(\Gamma/V(\Gamma), *)$ be the pointed profinite
quotient space with $V(\Gamma)$ as  distinguished point, and let
$\mathbb{F}_p[[E^*(\Gamma), *]]$ and $\mathbb{F}_p[[V(\Gamma)]]$ be
respectively the free profinite $\mathbb{F}_p$-modules over the pointed
profinite space $(E^*(\Gamma), *)$ and over the profinite space
$V(\Gamma)$ (cf. \cite[Section5.2]{RZ10}). Note that, when
  $E(\Gamma)$ is closed, then $\mathbb{F}_p[[E^*(\Gamma), *]]=\mathbb{F}_p[[E(\Gamma)]]$. Let the maps $\delta: \mathbb{F}_p[[E^*(\Gamma), *]] \to \mathbb{F}_p[[V(\Gamma)]] $ and $\epsilon : \mathbb{F}_p[[V(\Gamma)]] \to \mathbb{F}_p$ be defined respectively by $\delta(e)=d_1(e)-d_0(e)$ for all $e\in E^*(\Gamma)$ and $\epsilon(v)=1$ for all $v\in V(\Gamma)$. Then we have the following sequence of free profinite $\mathbb{F}_p$-modules
\begin{equation*}
    \begin{CD}
      0 @>>> \mathbb{F}_p[[E^*(\Gamma), *]] @>\delta>> \mathbb{F}_p[[V(\Gamma)]] @>\epsilon>>
      \mathbb{F}_p @>>> 0.
    \end{CD}
  \end{equation*}

  The profinite graph $\Gamma$ is a {\it pro-$p$ tree} if the above sequence is exact. If $T$ is a pro-$p$ tree, then we say that a pro-$p$ group $G$ acts on $T$ if it acts continuously on $T$ and the action commutes with $d_0$ and $d_1$.
  
  \medskip
  When we say that $(\G,\Gamma)$ is a finite graph of pro-$p$ groups we mean that it contains the data of the
underlying finite graph, the edge pro-$p$ groups, the vertex pro-$p$ groups and the attaching continuous maps. More precisely,
let $\Gamma$ be a connected finite graph.  A    graph of pro-$p$ groups $(\G,\Gamma)$ over
$\Gamma$ consists of  specifying a pro-$p$ group $\G(m)$ for each $m\in \Gamma$, and continuous monomorphisms
$\partial_i: \G(e)\longrightarrow \G(d_i(e))$ for each edge $e\in E(\Gamma)$.

\smallskip
A {\em morphism} of graphs of pro-$p$ groups  $(\alpha,\bar\alpha):
(\G,\Gamma) \rightarrow (\H,\Delta)$ is a pair  of continuous
maps $\alpha:\G\longrightarrow\H$, $\bar\alpha:\Gamma\longrightarrow
\Delta$ such that $\alpha_{\G(m)}:\G(m)\longrightarrow
\H(\bar\alpha(m))$ is a homomorphism for each $m\in \Gamma$ and the following diagram commutes

$$\xymatrix{
\G\ar@{->}^\alpha[rr]\ar@{->}^{\partial_{i}}[d] & &\H\ar@{->}^{\partial_i}[d]\\
\G\ar@{->}^{\bar\alpha}[rr] & &\H }$$  We say that $(\alpha, \bar\alpha)$ is a monomorphism if both $\alpha,\bar\alpha$ are injective. In this case its image will be called a subgraph of  groups of $(\H, \Delta)$. In other words, a {\em subgraph of groups} of  a graph of pro-$p$-groups
  $(\G,\Gamma)$ is a graph of groups $(\H,\Delta)$, where $\Delta$ is a
subgraph of $\Gamma$ (i.e., $E(\Delta)\subseteq E(\Gamma)$ and
$V(\Delta)\subseteq V(\Gamma)$, the maps $d_i$ on $\Delta$ are the
restrictions of the maps $d_i$ on $\Gamma$), and for each $m\in\Delta,$
$\H(m)\leq \G(m)$.

The  fundamental pro-$p$ group
$$G= \Pi_1(\G,\Gamma)$$
of a finite graph of pro-$p$ groups $(\G,\Gamma)$ is defined by means
of a universal property: $G$ is a profinite pro-$p$ group together
with the following data  and conditions:
\begin{enumerate}
\item [(i)] a maximal subtree $D$ of $\Gamma$;

\smallskip
\item [(ii)]  a collection of continuous homomorphisms
$$\nu_m: \G(m)\longrightarrow G\quad (m\in \Gamma), $$
  and     a continuous  map
   $E(\Gamma) \longrightarrow  G$, denoted $e\mapsto t_e$  ($e\in E(\Gamma)$), such that
$t_e=1$ if $e\in E(D)$, and
$$(\nu_{d_0 (e)}\partial_0)(x)= t_e(\nu_{d_1 (e)}\partial_1)(x)t_e^{-1},\quad  \forall x\in \G(e), \ e\in E(\Gamma); $$

\smallskip
\item [(iii)]  the following universal property is satisfied:

\medskip
\noindent whenever one has the following data

\begin{itemize}
\item $H$ is a pro-$p$ group,\\
\item $\beta_m: \G(m)\longrightarrow H$, $ (m\in \Gamma)$, 
a collection of continuous homomorphisms,\\
\item a map $e\mapsto s_e$ ($e\in E(\Gamma)$)  with $s_e=1$ if
$e\in E(D)$, and\\
\item $(\beta_{d_0 (e)}\partial_0)(x)= s_e(\beta_{d_1
(e)}\partial_1)(x)s_e^{-1}, \forall x\in \G(e), \ e\in
E(\Gamma),  $\end{itemize}

\smallskip
\noindent then there exists a unique continuous homomorphism $\delta : G\longrightarrow  H$ such that $\delta(t_e)= s_e$
 $(e\in E(\Gamma))$, and for each $m\in\Gamma$ the diagram

\medskip

$$\xymatrix{&
G  \ar[dd]^\delta   \\  \G(m)  \ar[ru]^{\nu_m}
\ar[rd]_{\beta_m }\\ &H }$$

\medskip
\noindent commutes.
\end{enumerate}

\medskip
If $(\G,\Gamma)$ is a finite graph of
finitely generated pro-$p$ groups, then by a theorem of Serre (stating that every finite index subgroup of a finitely generated pro-$p$ group is open, cf. \cite[\S 4.8]{RZ10}) the  fundamental
pro-$p$ group $G=\Pi_1(\G,\Gamma)$ of
$(\G,\Gamma)$ is the pro-$p$ completion of the usual
fundamental group $\pi_1(\G,\Gamma)$ 
(cf. \cite[\S5.1]{Serre-77}).

In \cite[paragraph (3.3)]{ZM-89b},  the fundamental pro-$p$ group
 $G$ is  defined explicitly in terms of generators and relations
 associated to a chosen subtree $D$. Namely, 
 
 $$  \Pi_1(\G,\Gamma)=\langle
 \G(v), t_e\mid {\rm Rel}(\G(v)), t_e=1 \ {\rm for}\  e\in D, \partial_0(g)=t_e\partial_1(g)t_e^{-1},\  {\rm for}\ g\in \G(e)\rangle$$

  Moreover, in the same paper it is also proved 
that the definition given above is independent of the choice of
the maximal subtree $D$.\\

Associated with the graph of pro-$p$ groups $(\G, \Gamma)$ there is
a corresponding  {\it  standard pro-$p$ tree}
  $T=T(G)=\bigcup_{m\in \Gamma}
G/\G(m)$  (cf. \cite[Theorem 3.8]{ZM-89b}) .  The vertices of
$T$ are those cosets of the form
$g\G(v)$, with $v\in V(\Gamma)$
and $g\in G$; the incidence maps of $T$ are given by the formulas:

$$d_0 (g\G(e))= g\G(d_0(e))  \textrm{ and }  d_1(g\G(e))=gt_e\G(d_1(e)),  \textrm{ for } e \in E(\Gamma).  $$

 There is a natural  continuous action of
 $G$ on $T$, and clearly $ G\backslash T= \Gamma$. If $H$ is an open subgroup in $G$, then $\Delta= H\backslash T$ is finite and there exists a standard connected transversal $s:\Delta\longrightarrow T$.
 
 The following  structure theorem for open subgroups will be used in the proof of our main result.
 
 \begin{thm} (\cite[Corollary 4.5]{ZM:90})\label{subgroup theorem} Let $H$ be an open subgroup of the fundamental pro-$p$ group $\Pi_1(\G,\Gamma)$ of a finite graph of pro-$p$ groups. Then $H$ is the fundamental pro-$p$ group $H=\Pi_1(\H,\Delta)$ with $\Delta=H\backslash T$ and vertex and edge groups $\H(m)$ of $(\H,\Delta)$ are stabilizers $H_{s(m)}$, where $s:\Delta\longrightarrow T$ is a connected transversal of $\Delta$ in $T$.
 
 \end{thm}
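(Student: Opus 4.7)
The plan is to proceed in the classical Bass--Serre fashion, adapted to the pro-$p$ category, using the action of $G = \Pi_1(\G,\Gamma)$ on its standard pro-$p$ tree $T$ as the universal covering. Since $H$ is open in the pro-$p$ group $G$, it has finite index, so $\Delta = H\backslash T$ is a finite graph. First I would choose a connected transversal $s\colon \Delta \to T$ by lifting a spanning tree $D_\Delta$ of $\Delta$ to a subtree of $T$ and then lifting the remaining edges arbitrarily. Setting $\H(m) := H_{s(m)} = H \cap G_{s(m)}$ for each $m \in \Delta$, one has $d_0(s(e)) = s(d_0(e))$ by construction, so $\partial_0 \colon \H(e) \hookrightarrow \H(d_0(e))$ is inclusion. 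Since $d_1(s(e))$ lies in the $H$-orbit of $s(d_1(e))$, I pick $t_e \in H$ with $d_1(s(e)) = t_e \cdot s(d_1(e))$ and $t_e = 1$ for $e \in D_\Delta$; then $\partial_1(x) := t_e^{-1} x t_e$ embeds $\H(e)$ into $\H(d_1(e))$. This yields the graph of pro-$p$ groups $(\H, \Delta)$.

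The inclusions $\H(m) \hookrightarrow H$ and the family $\{t_e\}_{e \in E(\Delta)}$ satisfy the defining compatibility conditions inside $H$, so the universal property of $\Pi_1(\H, \Delta)$ produces a unique continuous homomorphism
\[
\phi \colon \Pi_1(\H, \Delta) \longrightarrow H.
\]
To finish I must show $\phi$ is an isomorphism. The natural approach is to form the standard pro-$p$ tree $T'$ of $(\H, \Delta)$, exhibit a $\phi$-equivariant morphism of pro-$p$ graphs $\psi \colon T' \to T$ sending the base vertex of the orbit $\Pi_1(\H,\Delta)/\H(m)$ to $s(m)$, and then prove that $\psi$ is an isomorphism of pro-$p$ trees.

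The hard part will be this last step. Once $\psi$ is an isomorphism, surjectivity of $\phi$ follows because every $v \in V(T)$ equals $\phi(g)\cdot s(m)$ for some $g \in \Pi_1(\H,\Delta)$ and some $m \in \Delta$; given $h \in H$, applying this to $h\cdot s(m_0)$ for a fixed $m_0$ yields $\phi(g)^{-1}h \in \H(m_0) \subseteq \phi(\Pi_1(\H,\Delta))$, so $h \in \phi(\Pi_1(\H,\Delta))$. Injectivity follows because $\ker(\phi)$ acts trivially on $T' \cong T$ and each vertex stabilizer in $T'$ is $\H(m)$, which already embeds into $H$ by construction. The technical heart is thus to verify that $\psi$ is bijective. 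The approach is to compare the defining short exact sequences
\[
0 \to \F_p[[E^*(T'), *]] \to \F_p[[V(T')]] \to \F_p \to 0
\]
and its analog for $T$ as sequences of $\F_p[[H]]$-modules via $\phi$. By construction both vertex and edge modules decompose as direct sums indexed by $V(\Delta)$ and $E(\Delta)$ respectively, with the $m$-th summand having stabilizer $\H(m)$, so $\psi$ induces the obvious identification of these decompositions; compatibility with $d_0, d_1$ coming from the coherent choice of stable letters $\{t_e\}$ then upgrades this module isomorphism to an isomorphism of pro-$p$ trees, completing the proof.
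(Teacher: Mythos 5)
The paper does not prove this statement at all: it is imported as \cite[Corollary 4.5]{ZM:90}, so there is no internal proof to compare your argument against. Your setup is the correct and standard one: $\Delta=H\backslash T$ is finite because $H$ is open, a connected transversal $s$ exists, the stabilizers $H_{s(m)}$ with $\partial_0$ the inclusion and $\partial_1(x)=t_e^{-1}xt_e$ do form a graph of pro-$p$ groups over $\Delta$, and the universal property produces the continuous homomorphism $\phi\colon\Pi_1(\H,\Delta)\to H$. The deductions of surjectivity and injectivity of $\phi$ \emph{from} the bijectivity of $\psi$ are also sound.

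The genuine gap is exactly the step you defer to the end. First, the proposed proof that $\psi$ is bijective is circular: the vertex space of $T'$ is $\bigsqcup_{m}\Pi_1(\H,\Delta)/\nu_m(\H(m))$, and identifying its $m$-th summand with the $m$-th summand $H/\H(m)$ of $V(T)$ is precisely equivalent to knowing that $\phi$ is injective with $\phi^{-1}(\H(m))=\nu_m(\H(m))$, i.e.\ to the theorem being proved. (The comparison does not even typecheck as one of $\F_p[[H]]$-modules: $\F_p[[V(T')]]$ is a module over $\F_p[[\Pi_1(\H,\Delta)]]$, and it only acquires an $H$-module structure once $\phi$ is known to be an isomorphism.) Second, since $T=H\cdot s(\Delta)$ with distinct $H$-orbits for distinct $m\in\Delta$ and $\H(m)\subseteq\phi(\Pi_1(\H,\Delta))$, surjectivity of $\psi$ is \emph{equivalent} to $\phi(\Pi_1(\H,\Delta))=H$, so it cannot serve as an independent input for it. What is actually required is: (a) a connectivity argument on the pro-$p$ tree $T$ showing that the closed subgraph $\phi(\Pi_1(\H,\Delta))\cdot s(\Delta)$, which contains the connected transversal and is invariant under the closed subgroup generated by the $\H(m)$ and the $t_e$, must be all of $T$ --- this yields surjectivity of $\phi$; and (b) a genuine proof of injectivity of $\phi$, which in the pro-$p$ category is delicate (vertex groups of a graph of pro-$p$ groups need not even embed in its fundamental pro-$p$ group in general) and for open $H$ is typically obtained by induction on the index $[G:H]$ together with a pro-$p$ Reidemeister--Schreier computation, or by the machinery of \cite{ZM:90}. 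As written, neither (a) nor (b) is supplied, so the argument is incomplete at its crucial point.
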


 \section{Main results}

In this section we prove Theorem  \ref{thm:main}.

\begin{thm}\label{non-quadratic}
Let $G = G_{L_3}$ be  the right-angled Artin pro-$p$ group associated to the following graph

\begin{figure}[H]
\[\begin{tikzpicture}[x=.3 cm, y=.55 cm, 
    every edge/.style={
        draw,
        postaction={decorate,
                    decoration=
                   }
        }
]

        \vertex[fill] (v_4) at (16.00, 2.00){};	
         \vertex[fill] (v_5) at (21.00, 2.00){};	
          \vertex[fill] (v_6) at (26.00, 2.00){};	
           \vertex[fill] (v_7) at (31.00, 2.00){};

\path
     
(v_4) edge  (v_5)
(v_5) edge node[below]{$L_3$} (v_6)
(v_6) edge  (v_7)
        ;
\end{tikzpicture}\]
\end{figure}

Then $G$ is not a Bloch-Kato pro-$p$ group. 
\end{thm}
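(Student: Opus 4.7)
The plan is to find a closed subgroup $U \le G = G_{L_3}$ whose $\F_p$-cohomology algebra fails to be quadratic; this suffices, since every closed subgroup of a Bloch-Kato pro-$p$ group is by definition quadratic. The key structural input is the amalgamated pro-$p$ product decomposition $G = A \ast_{C} B$, where $A = G_{\{a,b,c\}} \cong F(a,c) \times \langle b \rangle$, $B = G_{\{b,c,d\}} \cong \langle c\rangle \times F(b,d)$, and $C = \langle b,c\rangle \cong \Z_p^2$, coming from the induced subgraphs on $\{a,b,c\}$ and $\{b,c,d\}$. The decisive feature is the \emph{twist} at the edge group: the central generator of $A$ is $b$, which is a \emph{free} generator of $B$, while the central generator of $B$ is $c$, which is a \emph{free} generator of $A$. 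This asymmetry is exactly what pushes $G_{L_3}$ outside the elementary-type class described in Theorem~\ref{thm:main}(vi) and is the structural source of the non-Bloch-Kato behaviour we want to detect.

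I would then look for $U$ as a well-chosen open subgroup of small index, for instance $U = \ker(\chi)$ for a character $\chi:G\twoheadrightarrow \F_p$ supported on $\{a,d\}$, or alternatively the closed normal subgroup generated by conjugates of $a$ and $d$ by powers of $b$ and $c$. Applying Theorem~\ref{subgroup theorem} to the action of $U$ on the standard pro-$p$ tree of the amalgamated product, $U$ is itself the fundamental pro-$p$ group $\Pi_1(\H, U\backslash T)$ of an explicit finite graph of pro-$p$ groups whose vertex and edge groups are the $U$-stabilizers $U\cap gAg^{-1}$, $U\cap gBg^{-1}$, $U\cap gCg^{-1}$ along a connected transversal $s:U\backslash T \to T$. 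From this graph-of-groups description, the low-degree cohomology $H^1(U,\F_p)$ and $H^2(U,\F_p)$ can be accessed via the Mayer--Vietoris long exact sequence.

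Non-quadraticity of $U$ would then be witnessed by a non-trivial Massey triple product in $H^\bullet(U,\F_p)$, or equivalently by a defining relator of $U$ that lies in the third term of the Zassenhaus filtration of its minimal free pro-$p$ cover but is not a consequence of the quadratic ones. Such a relator arises precisely from the interaction between conjugates of $a$ and $d$ by powers of $b$ and $c$: these all lie in $U$ and are linked by the twist described above, producing a cohomology class in degree three which is not decomposable into degree-two factors. The main obstacle is exactly this step: because $G$ itself is quadratic and even Koszul, the obstruction to Bloch-Kato is invisible at the level of $G$ and is concealed inside a proper closed subgroup. Pinpointing a specific $U$ on which the obstruction surfaces, and certifying non-quadraticity rather than merely computing a couple of Betti numbers, is the technical heart of the argument.
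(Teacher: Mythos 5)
Your overall strategy is the right one, and it matches the paper's in outline: pass to a well-chosen subgroup of $G_{L_3}$ and detect a relator of weight $3$ in the Zassenhaus filtration that obstructs quadraticity. Your decomposition $G = A\amalg_C B$ with $A=\langle b\rangle\times F(a,c)$, $B=\langle c\rangle\times F(b,d)$, $C\cong\Z_p^2$ is correct, and your candidate $U=\ker(\chi)$ for a character supported on the two endpoints is essentially the subgroup the paper uses. But there is a genuine gap, and you have flagged it yourself: the entire content of the theorem lies in the step you defer as ``the technical heart of the argument.'' As written, the proposal never exhibits a specific subgroup, never writes down its presentation, and never certifies that the presentation cannot be replaced by a quadratic one. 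Saying that a bad relator ``arises precisely from the interaction between conjugates of $a$ and $d$'' is a heuristic, not a proof; without the explicit relator and an explicit criterion that excludes it from any minimal presentation of a quadratic group, nothing has been established.

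To close the gap one must do what the paper does. Take $f:G\twoheadrightarrow C_p$ with $f(x)=a=f(w)^{-1}$, $f(y)=f(z)=1$ (in the paper's notation $x,y,z,w$ for the four vertices), and apply Theorem~\ref{subgroup theorem} to present $U=\ker(f)$ as the fundamental pro-$p$ group of an explicit finite graph of pro-$p$ groups. The key move, which your proposal does not anticipate, is that one does \emph{not} work with $U$ itself: one restricts to a subgraph of groups and obtains the smaller subgroup $H=\langle y,z,z^x,t=wx \mid [y,z]=1=[y,z^x],\ z^{t}=z^x\rangle = \langle z,t,y\mid [z,y]=1=[z^{t},y]\rangle$. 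Since $[z,y]=1$, the second relation rewrites as $[[z,t],y]=1$, a relation of weight $3$, and the cited result \cite[Theorem~7.3]{MiPaQuTa18} (see also \cite[Proposition~2.4]{QuSnVa19}) states that a quadratic pro-$p$ group cannot admit such a relation in a minimal set of defining relations. That citation is doing real work: your asserted equivalence between ``non-trivial triple Massey product'' and ``weight-$3$ relator not a consequence of the quadratic ones'' is itself a nontrivial statement that would need justification, and computing $H^1$ and $H^2$ of $U$ via Mayer--Vietoris (which gives Betti numbers, not the ring structure or Massey products) would not by itself certify non-quadraticity --- a point you concede. So the plan is sound and points in the right direction, but the proof is not yet there.
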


\begin{proof} Note that $G$ has the following pro-$p$ presentation
 $$G = \langle x, y, z, w   ~  \mid ~  [x, y] =  [y, z] =  [z, w] = 1  \rangle.$$ 
 Hence, we can consider $G$ as the fundamental pro-$p$ group of the following graph of pro-$p$ groups: 

\begin{figure}[H]
\[\begin{tikzpicture}[x=.3 cm, y=.55 cm, 
    every edge/.style={
        draw,
        postaction={decorate,
                    decoration= }}]

        \vertex[fill] (v_4) at (16.00, 2.00)[label=left:$\langle x\rangle \times\langle y\rangle$]{};	
         \vertex[fill] (v_5) at (21.00, 2.00)[label=below:$\langle y\rangle \times\langle z\rangle$]{};	
          \vertex[fill] (v_6) at (26.00, 2.00)[label=right:$\langle z\rangle \times\langle w\rangle$]{};

\path
     
(v_4) edge node[above]{$\langle y \rangle$} (v_5)
(v_5) edge node[above]{$\langle z \rangle$} (v_6)

        ;
\end{tikzpicture}\]
\end{figure}

Let $C_p = \langle a \rangle$ be a cyclic group of order $p$ and  $f:G \twoheadrightarrow C_p$ be an epimorphism defined by $f(y)=f(z)=1$ and $f(x)=f(w^{-1}) = a$.  
Then $U= \textrm{ker}(f)$ is a subgroup  of $G$ of index $p$ generated by $x^p,w^p, y,z, z^{x^i}$,  and $w^ix^i=t_i$ for $i=1,\ldots p-1$, subject to the relations $[x^p,y]=[y,z]=[z,w]=1$ and $z^{t_i}=z^{x^i}$ for $i=1,\ldots p-1$,  as it follows from Theorem \ref{subgroup theorem}.  More explicitly, $U$ is the fundamental pro-$p$ group of the following graph of pro-$p$ groups.

\bigskip
\begin{figure}[H]

\[\begin{tikzpicture}[x=.3 cm, y=.55 cm, 
every edge/.style={    draw,
    postaction={decorate,                decoration=
               }
    }
]

\vertex[fill] (v_1) at (0, 0)[label=left:$\langle x^p  \rangle  \times \langle y  \rangle  $]{};

        \vertex[fill] (v_2) at (18, 0)[label=below:$\langle y  \rangle  \times \langle z \rangle $]{};	

   	 \vertex[fill] (v_3) at (36, 0)[label=right:$\langle z  \rangle  \times \langle w^p  \rangle $]{};	

\vertex[fill] (v_4) at (18, 4)[label=below:$\langle y^x  \rangle  \times \langle z^x \rangle $]{};

 \vertex[fill] (v_5) at (18, 9)[label=above:$\langle  y^{x^{p-1}}  \rangle  \times \langle z^{x^{p-1}} \rangle $]{};

\path

(v_4) edge[dashed] (v_5)

(v_1) edge node[below]{$\langle y \rangle$} (v_2)

(v_1) edge[bend left] node[above, xshift=-25pt]{$ \langle y^{x^{p-1}}=y   \rangle$} (v_5) 

(v_4) edge[bend left] node[above, yshift=5pt]{$ \langle z^{t_1}  \rangle = \langle z^x \rangle$} (v_3) 

(v_5) edge[bend left] node[above, xshift=40pt]{$  \langle z^{t_{p-1}}  \rangle = \langle z^{x^{p-1}} \rangle$} (v_3) 

       (v_2) edge node[below]{$\langle z \rangle$} (v_3)

       (v_1) edge[bend left] node[above, yshift=4pt]{$ \langle y^x=y  \rangle$} (v_4) ;

\end{tikzpicture}\]


\end{figure}

\bigskip
Restrict this graph of pro-$p$ groups to the subgraph of groups of the lower four edges, replacing the vertex group $\langle x^p\rangle \times \langle y \rangle$ by $\langle y \rangle$ and the vertex group $\langle z\rangle \times \langle w^p \rangle$ by $\langle z \rangle$.

\bigskip
\begin{figure}[H]

\[\begin{tikzpicture}[x=.3 cm, y=.55 cm, 
every edge/.style={    draw,
    postaction={decorate,                decoration=
               }
    }
]

\vertex[fill] (v_1) at (0, 0)[label=left:$\langle y  \rangle  $]{};

        \vertex[fill] (v_2) at (18, 0)[label=below:$\langle y  \rangle  \times \langle z \rangle $]{};	

   	 \vertex[fill] (v_3) at (36, 0)[label=right:$\langle z  \rangle  $]{};	

\vertex[fill] (v_4) at (18, 4)[label=below:$\langle y^x  \rangle  \times \langle z^x \rangle $]{};

\path

(v_1) edge node[below]{$\langle y \rangle$} (v_2)

(v_4) edge[bend left] node[above, yshift=5pt]{$ \langle z^{t_1}  \rangle = \langle z^x \rangle$} (v_3)

       (v_2) edge node[below]{$\langle z \rangle$} (v_3)

       (v_1) edge[bend left] node[above, yshift=4pt]{$ \langle y^x=y  \rangle$} (v_4) ;

\end{tikzpicture}\]


\end{figure}

The fundamental pro-$p$ group of this subgraph is 
  a subgroup $$H=\langle y, z, z^x, t_1 = wx ~ \mid ~ [y, z]=1=[y, z^x], z^{t_1}=z^x \rangle.$$ 
  Thus       $H = \langle z, t_1, y ~ \mid ~ [z, y]=1 = [z^{t_1}, y]  \rangle.$ But  $1 = [z^{t_1}, y] = [[z,t_1], y]$ and by  \cite[Theorem~7.3]{MiPaQuTa18} (see also \cite[Proposition 2.4]{QuSnVa19}), a quadratic pro-$p$ group cannot have such a relation as a part of a minimal set of relations that define the group. Hence   $H$  is not quadratic. 
\end{proof} 

Since the maximal pro-$p$ Galois group of a field $K$ that contains a primitive $p$th root of unity is a Bloch-Kato pro-$p$ group (as it follows  from the solution of the Bloch-Kato conjecture), we deduce the following

\begin{cor}
 The right-angled Artin pro-$p$ group $G = G_{L_3}$  cannot be realized as a maximal pro-$p$ Galois group of some field $K$ that contains a primitive $p$th root of unity.
\end{cor}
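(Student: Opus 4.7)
The plan is to combine the previous theorem with the known consequences of the Bloch-Kato conjecture in Galois cohomology. By the work of Rost, Voevodsky, and Weibel, for any field $K$ containing a primitive $p$th root of unity, the norm residue map $K^M_\bullet(K)/p \to H^\bullet(G_K, \F_p)$ is an isomorphism of graded $\F_p$-algebras. Applying this fact to every field extension $L/K$ corresponding to a closed subgroup $U \leq G_K(p)$ (so that $L$ still contains a primitive $p$th root of unity and $G_L(p)$ sits as the corresponding quotient), one obtains that $H^\bullet(U, \F_p)$ is generated in degree one with relations in degree two, i.e.\ $U$ is quadratic. Hence $G_K(p)$ is a Bloch-Kato pro-$p$ group.

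Given this, the corollary follows in one line: Theorem~\ref{non-quadratic} shows that $G_{L_3}$ is not Bloch-Kato, so it cannot coincide with any $G_K(p)$ for $K$ containing a primitive $p$th root of unity, since the latter is always Bloch-Kato. I would write this as a direct contradiction argument: assume $G_{L_3} \cong G_K(p)$ for such a $K$, invoke the Bloch-Kato property of $G_K(p)$, and compare with Theorem~\ref{non-quadratic}.

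There is essentially no obstacle, since all the hard work has already been done (the non-quadraticity of the subgroup $H$ exhibited in the proof of Theorem~\ref{non-quadratic}, and the deep Rost-Voevodsky-Weibel theorem which is cited, not reproved). The only minor point worth stating explicitly is the reduction from closed subgroups to open subgroups for the Bloch-Kato property: quadraticity of every closed subgroup follows from quadraticity of every open subgroup together with the fact that cohomology with $\F_p$-coefficients commutes with inverse limits of open subgroups, but this is standard and need not be elaborated in the short proof.
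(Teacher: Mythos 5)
Your proposal is correct and follows exactly the paper's argument: the corollary is deduced by combining Theorem~\ref{non-quadratic} with the fact (from the Rost--Voevodsky--Weibel solution of the Bloch--Kato conjecture) that $G_K(p)$ is a Bloch--Kato pro-$p$ group whenever $K$ contains a primitive $p$th root of unity. The extra remark about passing from open to closed subgroups is fine but not needed beyond what the paper already assumes.
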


\begin{thm}\label{absolutely-torsion-free} The right-angled Artin pro-$p$ group $G = G_{L_3}$  is not absolutely torsion free.

\end{thm}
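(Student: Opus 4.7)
I plan to exhibit a closed subgroup $K$ of $G$ whose abelianization contains an element of order $p$. Since the property of being absolutely torsion free is inherited by closed subgroups, it suffices to locate such a $K$ inside the closed subgroup $H = \langle y, z, t_1 \mid [y, z] = [y, z^{t_1}] = 1 \rangle$ constructed in the proof of Theorem~\ref{non-quadratic}, which already carries the ``anomalous'' derived relation $[y, [z, t_1]] = 1$ responsible for its non-quadraticity.

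I would recall that $H$ is the fundamental pro-$p$ group of a four-cycle of abelian vertex groups ($\Z_p$ or $\Z_p^2$) with $\Z_p$ edge groups, the only nontrivial twist being conjugation by $t_1$ on one edge. I would then apply Theorem~\ref{subgroup theorem} to an appropriate open subgroup $L$ of $H$, obtaining a refined graph-of-groups presentation for $L$ in which the four-cycle of $H$ unfolds along the $L$-action on the associated pro-$p$ tree. Finally, from this refinement I would isolate a closed subgroup $K \le L$ whose graph-of-groups presentation carries a pro-$p$ HNN-type identification of the form $a \mapsto a \cdot b^p$ on some edge; upon abelianization, this twist would force an element of order dividing $p$ in $K^{ab}$.

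The main obstacle is locating the correct open subgroup $L$ and the further closed subgroup $K$. The most immediate candidates --- the kernels of the three continuous epimorphisms $H \to C_p$ that send a single generator of $H$ to a generator of $C_p$ --- yield, after Reidemeister-Schreier rewriting, graphs of pro-$p$ groups whose fundamental groups turn out to be pro-$p$ right-angled Artin groups on graphs containing neither $C_4$ nor $L_3$ as induced subgraph (typically a star with isolated vertices), and are therefore absolutely torsion free by the other implications of Theorem~\ref{thm:main}. A subtler choice of $L$ is required, one that encodes the interplay between the cyclic structure of $H$'s decomposition and the $t_1$-twist on its closing edge. The sought torsion in $K^{ab}$ should emerge from a genuinely pro-$p$ phenomenon: in the abstract setting, the ordinary right-angled Artin group $G(L_3)$ is residually torsion-free nilpotent and every subgroup has torsion-free abelianization, so the obstruction must arise from closed subgroups that appear only after passing to the pro-$p$ completion.
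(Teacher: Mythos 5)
Your outline correctly identifies the strategy the paper uses --- work inside the subgroup $H=\langle y,z,t\mid [y,z]=[y,z^{t}]=1\rangle$ from Theorem \ref{non-quadratic}, viewed as the pro-$p$ HNN extension of $H_1=\langle y\rangle\times F(z,z^x)$ with associated subgroups $\langle z\rangle$ and $\langle z^x\rangle$, and exhibit a subgroup whose edge identification abelianizes to a relation divisible by $p$ --- but there is a genuine gap: the subgroup that does the job is never produced, and you say so yourself (``the main obstacle is locating the correct open subgroup $L$\dots''; ``a subtler choice of $L$ is required''). That construction is the entire content of the argument. The paper's choice is the kernel of the epimorphism $H\twoheadrightarrow C_p$ sending $y\mapsto 1$, $z\mapsto -1$ (hence also $z^x\mapsto -1$) and $t\mapsto 0$; you only examined epimorphisms supported on a single generator, which is exactly why you found nothing. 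Concretely, set $V_1=\langle y^p,\,yz,\,yz^x\rangle\le H_1$, an index-$p$ subgroup with $V_1\cap\langle z\rangle=\langle z^p\rangle$, and $V=\langle V_1,t\rangle$. Theorem \ref{subgroup theorem} gives $V=HNN\bigl(V_1,\,(z^p)^t=(z^x)^p,\,t\bigr)$, and in $V_1^{ab}$, free on $a=\overline{y^p}$, $b=\overline{yz}$, $c=\overline{yz^x}$ (using that $y$ is central in $H_1$), one has $\overline{z^p}=pb-a$ and $\overline{(z^x)^p}=pc-a$, so the edge relation imposes $p(b-c)=0$ and $V^{ab}\cong \Z_p^{\,3}\oplus C_p$. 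Without this (or some equivalent) explicit subgroup and computation, the proposal is a plan rather than a proof.

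A secondary point: your closing heuristic is misleading and would have steered you away from the solution. The computation above is insensitive to the category --- the same index-$p$ subgroup of the discrete group $G(L_3)$ has abelianization $\Z^3\oplus\Z/p$ by the identical calculation --- so the torsion is not a ``genuinely pro-$p$ phenomenon,'' and residual torsion-free nilpotence of RAAGs does not imply that their subgroups have torsion-free abelianization (torsion-free nilpotent groups can already fail this: the subgroup $\langle a, b^p, [a,b]\rangle$ of the integral Heisenberg group $\langle a,b\rangle$ has $p$-torsion in its abelianization). The obstruction you are after is elementary and already visible at the level of finitely generated abstract subgroups.
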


\begin{proof}  Let $H$ be the subgroup of $G$ considered in the proof of Theorem \ref{non-quadratic}. Then
\[  \begin{split}
   H=\langle y, z, z^x, t = wx ~ \mid ~ [y, z]=1=[y, z^x], z^{t}=z^x \rangle \\
 =HNN(H_1=\langle y, z, z^x\rangle,z^{t}=z^x, t).
 \end{split} \]
 Note that $H_1=\langle y\rangle \times F(z,z^x)$ is a direct product of a procyclic group and a free pro-$p$ group of rank 2. Then $V_1=\langle yz, yz^x, y^p\rangle$ is a subgroup of index $p$ in $H_1$. Indeed, the natural projection $H_1\longrightarrow F(z,z^x)$ restricted to $V_1$ is surjective and so its image  in $H_1/\langle y^p\rangle\cong C_p\times F(z, z^x)$ has index $p$.  
 
 Now let $V = \langle V_1, t\rangle$. Then $V=HNN(V_1, {(z^x)}^p={(z^p)}^t, t)$ and 
$$V^{ab}=(V_1^{ab}/[t, z^p])\oplus \langle t\rangle= \Big((\langle y^p\rangle \oplus \langle yz\rangle \oplus \langle yz^x\rangle)/\langle {(z^x)}^pz^{-p}\rangle\Big) \oplus  \langle t\rangle.$$  
Since $y$ is central in $V_1$, we have $z^p=(yz)^p y^{-p}$  and ${(z^x)}^p = (yz^x)^p y^{-p}$. Thus putting $a=y^p, b=yz, c=yz^x$ and passing to the additive notation, we have  ${(z^x)}^pz^{-p} = pc-a-(pb-a)=p(c-b),$ 
and consequently,   
\[  \begin{split}
V_1^{ab}/[t, z^p]=(\langle a\rangle \oplus \langle b\rangle\oplus \langle c\rangle)/\langle p(c-b)\rangle \\
 = (\langle a\rangle \oplus \langle b\rangle\oplus \langle c-b\rangle)/\langle p(c-b)\rangle\cong \Z_p\oplus \Z_p\oplus C_p.
 \end{split} \]

Hence $V^{ab} \cong \Z_p\oplus \Z_p\oplus \Z_p\oplus C_p.$

\end{proof}

\begin{proof}[\textbf{Proof of Theorem~\ref{thm:main}}]

$(i) \implies (ii).$ We follow the idea of \cite[Theorem]{Dr87}. 
We will use induction on the size of $\Gamma$. If $\Gamma$ consists of a single vertex, the result is clear. Suppose that $\Gamma$ has more than one vertex. Let $\Gamma_1$, ..., $\Gamma_k$ be the connected components of $\Gamma$. If $k \geq 2$, then we have the free pro-$p$ decomposition $G_{\Gamma} = G_{\Gamma_1} \amalg  \cdots \amalg G_{\Gamma_k}$. Since  $\Gamma_i$, for each $1 \leq i \leq k$,  does not have an induced subgraph of type $C_4$ or $L_3$, by induction every closed subgroup of  $G_{\Gamma_i}$ is a right-angled Artin pro-$p$ group. Let $H$ be a closed subgroup of  $G_{\Gamma}$. Since $G_{\Gamma}$ is finitely generated, it follows that $H$ is a second-countable subgroup of $G_{\Gamma}$. The pro-$p$ version of the Kurosh subgroup theorem (\cite[Theorem 9.6.2]{Ribes-Book}) together with the fact that a free pro-$p$ product of right-angled Artin pro-$p$ groups is a right-angled Artin pro-$p$ group (as it follows from the pro-$p$ presentation), yield that $H$ is  a right-angled Artin pro-$p$ group. 

Now suppose that   $\Gamma$  is connected. Since  $\Gamma$  does not have an induced subgraph of type $C_4$ or $L_3$, by  \cite[Lemma]{Dr87},  $\Gamma$  has a vertex $v$ which is joined to every other vertex of  $\Gamma$. Let  $\Gamma'$ be the graph obtained from  $\Gamma$  by removing the vertex $v$ and all edges incident to $v$.  Then $G_{\Gamma} = \langle v \rangle \times G_{\Gamma'}$, where $\langle v \rangle$ is isomorphic to $\Z_p$. Let $ \varphi: G_{\Gamma}  \twoheadrightarrow G_{\Gamma'}$ be the natural projection. We have the short exact sequence 
$$\langle v \rangle  \hookrightarrow G_{\Gamma} \mathop{\twoheadrightarrow}^{\varphi}  G_{\Gamma'}.$$
Let $H$ be a closed subgroup of $G_{\Gamma}$. Then we have the following short exact sequence
$$H \cap \langle v \rangle  \hookrightarrow H \mathop{\twoheadrightarrow}^{\varphi}  \varphi(H).$$

We claim that this sequence splits. Note that, since $\Gamma'$ does not have an induced subgraph of type $C_4$ or $L_3$, by induction $\varphi(H)$ is a right-angled Artin pro-$p$ group. Let $ \Delta$ be a profinite graph such that $\varphi(H) = G_{\Delta}.$  Let $u$ be a vertex of $ \Delta$, and choose  $h \in H$  such that $\varphi(h) = u$. Note that $\varphi$ is an endomorphism and so $\varphi(h) = u\in G$. Then $\varphi(uh^{-1}) = uu^{-1} = 1$, so  $uh^{-1} \in  \langle v \rangle$. Hence, there is $\alpha_u \in  \Z_p$ such that $uv^{\alpha_u} \in H$. It follows that the function $\rho : \varphi(H) \to H$ defined by $\rho(u) = uv^{\alpha_u} $ for each $u \in \Delta$ is a homomorphism. Indeed, since $v$ is in the center of $G_{\Gamma}$, for $u_1, u_2 \in \Gamma'$, we have
$$[\rho(u_1), \rho(u_2)] = [u_1v^{\alpha_{u_1}}, u_2v^{\alpha_{u_2}} ] = [u_1, u_2] = 1. $$
Moreover, $ \varphi \rho = 1_{ \varphi(H)}$, i.e., $ \rho$ is a section.  

Now since $H \cap \langle v \rangle$ is in the center of $H$,  we have $H = (H \cap \langle v \rangle)  \times  \varphi(H)$. If $H \cap \langle v \rangle$ is trivial, then $H = G_{\Delta}$. Otherwise $H \cap \langle v \rangle$ is isomorphic to $\Z_p$, so we have $H = G_{\Delta'}$, where $\Delta'$ is a profinite graph obtained from $\Delta$ by adding a new vertex $w$ and a profinite space $E$ homeomorphic to $V(\Delta)$ of new edges that connect $w$ with all the vertices of $\Delta$. More precisely, if $\rho:E\longrightarrow V(\Delta)$ is a homeomorphism, then we define $d_0(e)=w$ and $d_1(e)=\rho(e)$ for each $e\in E$.

\medskip Note that the proof above yields also the implication
$(i) \implies (vi).$  

\medskip
$(ii) \implies (iii).$  Follows from the fact that every right-angled Artin pro-$p$ group has a torsion free abelianization, as  follows from its pro-$p$ presentation.

$(iii) \implies (i)$ and $(iv) \implies (i).$   Suppose that $\Gamma$ contains $C_4$ as an induced subgraph. Then  $G_{\Gamma}$ contains a subgroup isomorphic to $G_{C_4}$. Note that $G_{C_4}  = F_2 \times F_2$, where $F_2$ is a free pro-$p$ group of rank 2. Hence, by \cite[Theorem~5.6]{Qu14},  $G_{C_4}$  is not a Bloch-Kato pro-$p$ group, and therefore, $G_{\Gamma}$ is not as well. Moreover,  $G_{\Gamma}$ is not absolutely torsion free by  \cite[Proposition 4]{Wu85}.  On the other hand, if $\Gamma$ contains $L_3$ as an induced subgraph, then $G_{\Gamma}$  is not a  Bloch-Kato pro-$p$ group by Theorem  \ref{non-quadratic}, and  $G_{\Gamma}$ is not absolutely torsion free by Theorem \ref{absolutely-torsion-free}.

$(i) \implies (v).$ This implication follows from  \cite[Proposition 5.8]{CaQu19}.

$(v) \implies (iv).$ Follows from the solution of the Bloch-Kato conjecture (cf. \cite[Theorem A]{QuWe18}).

$(vi) \implies (v).$  This implication is proved implicitly in the proof of  \cite[Proposition 5.8]{CaQu19}. Indeed, it follows from the following two facts. If $K$ is a field that contains a primitive $p$th root of unity, then $G_{K((t))}(p)  \cong  \Z_p  \times  G_K(p)$. Moreover, if the pro-$p$ groups $G_1$ and $G_2$ occur as maximal pro-$p$ Galois groups, then their free pro-$p$ product occurs as well as the maximal pro-$p$ Galois group of some field $K$ (cf.  \cite[Remark 3.4]{Ef98}).
\end{proof}

 \section{Smoothness Conjecture}

First we recall some definitions and facts that will be needed in the proof of Theorem~\ref{smoothness-thm}.  A morphism $ \varphi: (G_1,  {\theta}_1)  \to (G_2, { \theta}_2)$ of $p$-oriented pro-$p$ groups is a continuous homomorphism $ \varphi: G_1  \to G_2$ such that $ \theta_1 =  \theta_2 \circ  \varphi.$ If $ \varphi$ induces an isomorphism $G_1/{ \Phi(G_1)}  \to G_2/{ \Phi(G_2)}$ on the Frattini quotients, then we say that $ \varphi$ is a cover.  Given a $p$-oriented pro-$p$ group $ \mathcal{G} = (G,   \theta),$  let 
$$K( \mathcal{G}) =  \langle h^{- \theta(g)}ghg^{-1} ~  \mid ~ g  \in G,  h  \in  \textrm{Ker}( \theta)\rangle.$$ 
Note that $K( \mathcal{G})$ is a normal subgroup of $G$ contained in $ \textrm{Ker}( \theta)$ and $ {\textrm{Ker}( \theta)}/{K( \mathcal{G})}$ is abelian. Moreover, since the generators of $K( \mathcal{G})$ belong to $ \textrm{Frat}(G)$, we have that $K( \mathcal{G}) \subseteq  \textrm{Frat}(G),$ so the morphism $ \mathcal{G} \to \mathcal{G}/{K( \mathcal{G})}$ is a cover. When $G$ is torsion free, $G/{ \textrm{Ker}( \theta)}  \cong  \textrm{im}( \theta)$ is isomorphic to either $ \mathbb{Z}_p$ or $ \{ 1  \}.$ Hence, the epimorphism $G/ {K( \mathcal{G})}  \to G/{ \textrm{Ker}( \theta)}$ splits, so we have 
$$ G/ {K( \mathcal{G})}   \cong (\textrm{Ker}( \theta)/ {K( \mathcal{G})})   \rtimes  ( G/{ \textrm{Ker}( \theta)}),$$
where the action is given by exponentiation by $\theta,$ i.e., $\bar{g}\bar{h}{\bar{g}}^{-1} =  {\bar{h}}^{\theta(g)}$ for $h \in \textrm{Ker}(\theta)$ and $g \in G.$
We say that $ \mathcal{G} = (G,   \theta)$ is Kummerian if $\textrm{Ker}( \theta)/ {K( \mathcal{G})}$ is a free abelian pro-$p$ group (cf. \cite[Definition3.4 and Proposition~3.3]{EfQu19}). Note that in this case $G/ {K( \mathcal{G})}$ is a herediteraly uniform pro-$p$ group (cf. \cite{KloSno14}  and \cite{Qu14}); in particular, if $\theta$ is the trivial homomorphism, then $G/ {K( \mathcal{G})}$ is a free abelian pro-$p$ group.

\begin{lem} \label{smooth-abs-tor-free}
Let $G$ be a pro-$p$ group and let $(G, 1)$ be the $p$-oriented pro-$p$ group $(G, \theta)$ with $\theta$ the trivial homomorphism. Then $(G, 1)$  is $1$-smooth if and only if $G$ is absolutely torsion free.   
\end{lem}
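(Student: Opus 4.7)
The plan is to translate the $1$-smoothness condition for $(G,1)$ into torsion-freeness of abelianizations of open subgroups, and then to bridge from open to arbitrary closed subgroups via an $HN$-trick.

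Since $\theta$ is trivial, $\Z_p(1) = \Z_p$ carries the trivial $G$-action. The Bockstein long exact sequence associated with $0 \to \Z_p \xrightarrow{\cdot p} \Z_p \to \F_p \to 0$ shows that condition (ii) of $1$-smoothness is equivalent, for each open $U \leq G$, to the surjectivity of the reduction-mod-$p$ map
\[
\mathrm{Hom}_{cts}(U^{ab}, \Z_p) = H^1_{cts}(U, \Z_p) \longrightarrow H^1(U, \F_p) = \mathrm{Hom}(U^{ab}, \F_p).
\]
For a finitely generated pro-$p$ group $U$ (which is automatic when $G$ is), the structure theorem for finitely generated pro-$p$ abelian groups gives $U^{ab} \cong \Z_p^r \oplus T$ with $T$ a finite $p$-group, and a direct inspection shows that this surjectivity holds precisely when $T = 0$, i.e., when $U^{ab}$ is torsion-free. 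Hence $(G,1)$ is $1$-smooth if and only if every open subgroup $U \leq G$ has torsion-free abelianization.

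The implication $G$ absolutely torsion-free $\Rightarrow (G,1)$ $1$-smooth is then immediate, since open subgroups are closed. For the converse I would argue contrapositively: suppose some closed $H \leq G$ has nontrivial $p$-torsion in $H^{ab}$, witnessed by an element $h \in H$ with $h \notin \overline{[H,H]}$ and $h^p \in \overline{[H,H]}$. Since $\overline{[H,H]}$ is closed in $G$, there is an open normal subgroup $N \triangleleft G$ with $h \notin \overline{[H,H]} \cdot N$. Setting $U := HN$, an open subgroup of $G$, the normality of $N$ in $G$ yields the commutator containment $[HN,HN] \subseteq [H,H] \cdot N$, whence $\overline{[U,U]} \subseteq \overline{[H,H]} \cdot N$ and therefore $h \notin \overline{[U,U]}$. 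On the other hand $h^p \in \overline{[H,H]} \subseteq \overline{[U,U]}$, so the image of $h$ in $U^{ab}$ is a nonzero $p$-torsion element, contradicting the torsion-freeness of $U^{ab}$ that $1$-smoothness provides.

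The main obstacle is precisely this last step, namely promoting the torsion-freeness of abelianizations from open subgroups (where $1$-smoothness delivers it) to arbitrary closed subgroups (where absolute torsion-freeness demands it). The $HN$-trick is the crucial ingredient: it converts any hypothetical $p$-torsion witness $h$ in the closed subgroup $H$ into a $p$-torsion witness in the abelianization of a sufficiently small open overgroup $HN$ of $H$, using only that $\overline{[H,H]} = \bigcap_{N} \overline{[H,H]} \cdot N$ together with the commutator containment $[HN,HN] \subseteq [H,H]\cdot N$.
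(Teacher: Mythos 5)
Your proposal is correct and follows essentially the same route as the paper: for the trivial orientation $\Z_p(1)$ is the trivial module, so condition (ii) of $1$-smoothness amounts to the surjectivity of $\mathrm{Hom}(U,\Z_p)\to\mathrm{Hom}(U,\F_p)$ for open $U$, which holds precisely when $U^{\mathrm{ab}}$ is torsion free. The only difference is that you explicitly justify the passage from open to arbitrary closed subgroups via the $HN$-argument (which is correct), a step the paper's proof leaves implicit.
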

\begin{proof}
Note that, if $ \theta $  is the trivial homomorphism, then $\Z_p(1)$ is the trivial module $\Z_p$ (see (\ref{action})), so for every open subgroup $U$ of $G$,  $H^{1}_{cts}(U,\Z_p(1))$ coincides with the group of continuous group homomorphisms $\textrm{Hom}(U,  \Z_p).$  Since $\textrm{Hom}(U,  \Z_p) = U^{ \textrm{ab}}/  {\textrm{Tor}(U^{ \textrm{ab}})}$,  the natural epimorphism $\Z_p(1)  \twoheadrightarrow \F_p = {\Z_p(1)}/p{\Z_p(1)}$ induces an epimorphism 
  $H^{1}_{cts}(U,\Z_p(1)) \twoheadrightarrow H^{1}(U, {\Z_p(1)}/p{\Z_p(1)})$
  for every open subgroup $U$ of $G$ if and only if $U^{ \textrm{ab}}$ is torsion free. 
Hence,  $(G, 1)$  is $1$-smooth if and only if $G$ is absolutely torsion free. 
\end{proof}

\begin{lem} \label{non-smooth}
Let $\Gamma$ be a finite simplicial graph that contains as an induced subgraph either $C_4$ or $L_3$ and let $G = G_{\Gamma}.$  Then there is no homomorphism  $ \theta: G  \to  \Z_p^{ \times}$  such that  the $p$-oriented pro-$p$ group $(G,  \theta)$ is  $1$-smooth. 
\end{lem}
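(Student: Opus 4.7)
My plan is to argue by contradiction, splitting into two cases according to whether $\theta$ is trivial.

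\textbf{Trivial case.} If $\theta$ is the trivial homomorphism, then by Lemma~\ref{smooth-abs-tor-free}, $(G,1)$ being $1$-smooth is equivalent to $G$ being absolutely torsion free. Since $\Gamma$ contains $C_4$ or $L_3$ as an induced subgraph, the standard retraction (sending the vertices of $\Gamma$ outside that induced subgraph to $1$) realizes $G_{C_4}=F_2\times F_2$, respectively $G_{L_3}$, as a closed subgroup of $G$, and absolute torsion freeness passes to closed subgroups by definition. Thus it suffices to invoke W\"urfel's result \cite[Proposition~4]{Wu85} that $F_2\times F_2$ is not absolutely torsion free, respectively Theorem~\ref{absolutely-torsion-free} that $G_{L_3}$ is not absolutely torsion free, to reach a contradiction.

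\textbf{Non-trivial case.} If $\theta$ is non-trivial, then, since $G$ is pro-$p$ and $\theta(G)\subseteq 1+p\Z_p$ (or $1+4\Z_2$ when $p=2$), the image is a non-trivial closed pro-$p$ subgroup of $\Z_p$ and hence isomorphic to $\Z_p$. In particular $\textrm{Ker}(\theta)$ is closed but not open, so one cannot directly reduce to the previous case. Instead I would use the equivalent Bockstein formulation of $1$-smoothness from condition (ii): every homomorphism $\chi:U\to\F_p$ from an open subgroup $U\leq G$ must lift to a continuous $\theta$-twisted $1$-cocycle $\chi':U\to\Z_p(1)$. For each edge $\{u,v\}$ of the RAAG presentation, the commutator relation $[u,v]=1$ translates via the cocycle identity into the linear equation
\[
(1-\theta(v))\,\chi'(u)+(\theta(u)-1)\,\chi'(v)=0 \quad \text{in } \Z_p.
\]
Analysis of these equations at the level of $G$ forces $v_p(\theta(v)-1)$ to be constant on every edge of the relevant $C_4$ or $L_3$ induced subgraph; let $n$ be this common value. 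I would then pass to the open subgroup $U=\textrm{Ker}(\bar\theta)$ of index $p$, where $\bar\theta(g)=(\theta(g)-1)/p^n \bmod p$ is the leading-term homomorphism, compute Schreier generators of $U$, and produce a character of $U$ whose lift is obstructed by the analogous linear system at the next depth, contradicting $1$-smoothness of $(U,\theta|_U)$.

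\textbf{Main obstacle.} The delicate step is the non-trivial case: at $U$ the restricted orientation $\theta|_U$ has strictly greater $p$-adic depth, so the obstruction system becomes naively easier to solve, and one must pinpoint a persistent non-lifting character. A potentially cleaner alternative I would pursue is to combine $1$-smoothness of $(U,\theta|_U)$ with its Kummerian consequence that $\textrm{Ker}(\theta|_U)/K(U,\theta|_U)$ is torsion free, and then exhibit an explicit torsion class in this quotient, constructed along the same lines as the subgroup $V\leq G_{L_3}$ in the proof of Theorem~\ref{absolutely-torsion-free}, and an analogous subgroup of $F_2\times F_2$ witnessing the failure of absolute torsion freeness in the $C_4$ case.
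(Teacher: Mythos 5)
Your trivial-orientation case is correct and agrees with the paper. The genuine gap is the non-trivial case, which is the entire substance of the lemma, and neither of your two strategies closes it. The cocycle-equation strategy stalls exactly where you say it does: for $G$ itself the linear system $(1-\theta(v))\,\chi'(u)+(\theta(u)-1)\,\chi'(v)=0$ is generically solvable, and you give no mechanism for producing the ``persistent non-lifting character'' at deeper open subgroups; asserting that one exists is not a proof. Your ``cleaner alternative'' points in the direction the paper actually takes, but only as a pointer. In particular, the paper's argument is \emph{not} ``exhibit a torsion class in $\textrm{Ker}(\theta|_V)/K(\mathcal{V})$'': once $\theta|_V\neq 1$, the subgroup $K(\mathcal{V})$ is strictly larger than the commutator subgroup, so the torsion in $V^{ab}$ need not survive in that quotient, and the contradiction must be extracted differently. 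Concretely, the paper uses that $V/K(\mathcal{V})$ is a non-abelian hereditarily uniform pro-$p$ group in which every element outside $\textrm{Ker}(\theta|_V)/K(\mathcal{V})$ has procyclic centralizer; this forces $\psi(y^p),\psi(yz),\psi(yz^x)$ to be a basis of the rank-$3$ free abelian group $\textrm{Ker}(\theta|_V)/K(\mathcal{V})$ and $\psi(t)$ to act by $a\mapsto a^{1+p\gamma}$, and only then does the relation $(z^x)^p=(z^p)^t$ yield an impossible identity among basis elements. None of these steps appears in your proposal, and the $C_4$ case needs a separate argument (the paper applies the procyclic-centralizer property directly to the images of the four generators of $F_2\times F_2$, with no auxiliary subgroup at all), not an ``analogous subgroup'' whose existence you do not establish.

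A second, smaller gap: $1$-smoothness is defined by a condition on \emph{open} subgroups only, while both the reduction from $\Gamma$ to the induced subgraph ($G_{C_4}$ or $G_{L_3}$ is closed but not open in $G_\Gamma$) and the passage to $V$ (closed but not open in $G_{L_3}$) require knowing that $1$-smoothness is inherited by arbitrary closed subgroups. The paper justifies this via the direct-limit isomorphism $H^2(C,\theta|_C)\simeq\varinjlim_{U\in\mathcal{U}_C}H^2(U,\theta|_U)$ over the open subgroups $U$ containing $C$; your proposal uses closed-subgroup heredity in the non-trivial case without comment.
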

\begin{proof}
Suppose there is a homomorphism $ \theta: G  \to  \Z_p^{ \times}$  such that $(G,  \theta)$ is $1$-smooth.  For a closed subgroup $K$ of $G$, let $H^2(K, \theta_{\mid K}): = H^{2}(K, \Z_p(1))$; here we consider $(K,  \theta_{\mid K})$  as a $p$-oriented profinite group. Fix a closed subgroup $C$ of $G$ and let $ \mathcal{U}_C$ be the family of all open subgroups of $G$ containing $C$. By \cite[I.2.2, Proposition~8]{Se97}, $H^2(C, \theta_{\mid C}) \simeq \varinjlim_{U \in \mathcal{U}_C}H^2(U, \theta_{\mid U}).$ It follows that $(C,  \theta_{\mid C})$ is $1$-smooth for every closed subgroup $C$ of $G$. Hence, in order to prove the theorem, it suffices to show that there is no homomorphism  $ \theta: G  \to  \Z_p^{ \times}$  such that  the $p$-oriented pro-$p$ group $(G,  \theta)$ is  $1$-smooth when  $ \Gamma = C_4$ or $ \Gamma = L_3.$

First suppose that $\Gamma = C_4$.  Then $(G,  \theta)$ is $1$-smooth, and by  \cite[Theorem~7.1]{EfQu19}, every open subgroup of   $G$ is Kummerian.  By Lemma~\ref{smooth-abs-tor-free} and Theorem~\ref{thm:main}, $ \theta   \neq 1$. Hence, $ G/ {K( \mathcal{G})}   \cong (\textrm{Ker}( \theta)/ {K( \mathcal{G})})   \rtimes  ( G/{ \textrm{Ker}( \theta)})$ is a non-abelian hereditarily uniform pro-$p$ group. Recall that the morphism $ \varphi: G \to G/ {K( \mathcal{G})}$ is a cover.  Let $G = F_2  \times F_2 =   \langle x_1, x_2  \rangle  \times  \langle x_3, x_4 \rangle.$ Since $ \varphi$ is a cover, there is some $i$, say $i=1$, such that $ \varphi(x_1)  \not  \in \textrm{Ker}( \theta)/ {K( \mathcal{G})}.$ Then the centralizer of $ \varphi(x_1)$ is procyclic, which is a contradiction, since    $ \varphi(x_3)$ and  $ \varphi(x_4)$ commute with  $ \varphi(x_1).$

Now suppose that  $ \Gamma = L_3.$  Let $V = \langle yz, yz^x, y^p, t=wx\rangle$ be the  subgroup of $G = G_{L_3} = \langle x, y, z, w  ~  \mid ~  [x, y] =  [y, z] =  [z, w] = 1  \rangle$ defined in the proof of Theorem~\ref{absolutely-torsion-free}. Then $(V,  \theta_{\mid V})$ is $1$-smooth. Since the abelianization of $V$ has a non-trivial torsion,  $\theta_{\mid V}   \neq 1.$ Hence,  $ V/ {K( \mathcal{V})}   \cong (\textrm{Ker}( \theta_{\mid V} )/ {K( \mathcal{V})})   \rtimes  ( V/{ \textrm{Ker}( \theta_{\mid V} )})$ is a non-abelian hereditarily unfiorm pro-$p$ group. Moreover, since the minimal set of generators of $V$ consists of four elements and  $V/{ \textrm{Ker}( \theta_{\mid V} )}  \cong  \textrm{im}( \theta_{\mid V}) \cong \Z_p,$ we have that $\textrm{Ker}( \theta_{\mid V} )/ {K( \mathcal{V})}$ is a free abelian pro-$p$ group of rank 3. Consider the cover $ \psi: V \to V/ {K( \mathcal{V})},$ and let $a_1 = \psi(y^p)$, $a_2 = \psi(yz)$,  $a_3 = \psi(yz^x)$, $b = \psi(t).$ Then $ \{a_1, a_2, a_3, b  \}$ is a minimal generating set of $ V/ {K( \mathcal{V})} .$  Note that $y^p$ commutes with $yz$ and  $yz^x,$ so $a_1$ commutes with $a_2$ and $a_3.$ Since any element of  $V/ {K( \mathcal{V})}$ which is not contained in $\textrm{Ker}( \theta_{\mid V} )/ {K( \mathcal{V})}$ has a procyclic centralizer, it follows that $a_1, a_2, a_3$ are contained  in $\textrm{Ker}( \theta_{\mid V} )/ {K( \mathcal{V})},$ and moreover, they form a minimal generating set of the group. Hence, $b  \notin \textrm{Ker}( \theta_{\mid V} )/ {K( \mathcal{V})},$ so there is some $0  \neq  \gamma  \in  \Z_p,$ such that $b^{-1}ab = a^{1+ p \gamma}$ for every $a \in \textrm{Ker}( \theta_{\mid V} )/ {K( \mathcal{V})}.$ Now recall that  in $V$ we have the  relation ${(z^x)}^p={(z^p)}^t.$ If we apply $\psi$ to this relation, we obtain the relation 
$a_3^p{a_1}^{-1} = {a_2}^{p(1+ p\gamma)}{a_1}^{-(1+ p\gamma)},$ which clearly  yields a contradiction, since  $\langle a_1, a_2, a_3 \rangle$ is a free abelian pro-$p$ group of rank 3.

 Hence, there is no homomorphism  $ \theta: G  \to  \Z_p^{ \times}$  such that  the $p$-oriented pro-$p$ group $(G,  \theta)$ is  $1$-smooth.

\end{proof}

\begin{proof}[\textbf{Proof of Theorem~\ref{smoothness-thm}}]
The proof follows from Lemma~\ref{non-smooth},  Lemma~\ref{smooth-abs-tor-free} and Theorem~\ref{thm:main}. 
\end{proof}

 \section{Coherent right-angled Artin pro-$p$ groups}

A finite simplicial graph $\Gamma$ is called a \emph{chordal graph} if each circuit of $\Gamma$ of four or more
 vertices has a chord (i.e., an edge that is not part of the circuit but connects two vertices of the circuit). 
  Equivalently, every induced circuit in the graph should have exactly three vertices.

\begin{lem}\label{lemma-chordal}
If $\Gamma$ is a chordal graph,  then the commutator subgroup of the  right-angled Artin
 pro-$p$ group $ G_{\Gamma}$ is  a free pro-$p$ group. 
\end{lem}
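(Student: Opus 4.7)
The plan is to induct on $|V(\Gamma)|$. The base cases --- empty graph, single vertex, or complete graph --- are immediate, since $G_\Gamma$ is then abelian and $[G_\Gamma,G_\Gamma]=1$ is trivially a free pro-$p$ group.

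For the inductive step I split into two subcases. If $\Gamma$ is disconnected with components $\Gamma_1,\dots,\Gamma_k$, the pro-$p$ presentation gives $G_\Gamma=G_{\Gamma_1}\amalg\cdots\amalg G_{\Gamma_k}$, a free pro-$p$ product. If $\Gamma$ is connected and not complete, then Dirac's classical theorem on chordal graphs provides a clique separator $K$, so $\Gamma=\Gamma_1\cup\Gamma_2$ with $\Gamma_1\cap\Gamma_2=K$ and both $\Gamma_i$ proper chordal induced subgraphs. Comparing pro-$p$ presentations yields $G_\Gamma\cong G_{\Gamma_1}*_{G_K}G_{\Gamma_2}$; the amalgamation is proper because each inclusion $G_K\hookrightarrow G_{\Gamma_i}$ is split by the retraction $G_{\Gamma_i}\twoheadrightarrow G_K$ killing the generators in $V(\Gamma_i)\setminus K$. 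In both subcases $G_\Gamma$ is the fundamental pro-$p$ group of a finite graph of pro-$p$ groups, and therefore acts on the associated standard pro-$p$ tree $T$ of \S\ref{preliminaries} with vertex stabilizers conjugate to the $G_{\Gamma_i}$'s and edge stabilizers conjugate to $G_K$ (trivial in the disconnected case).

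Now set $H=[G_\Gamma,G_\Gamma]$. Since $G_\Gamma^{ab}\cong\Z_p^{V(\Gamma)}$ and the abelianizations $G_{\Gamma_i}^{ab}$ and $G_K^{ab}$ embed as the natural coordinate inclusions $\Z_p^{V(\Gamma_i)}, \Z_p^{K}\hookrightarrow\Z_p^{V(\Gamma)}$, one computes $H\cap G_{\Gamma_i}=[G_{\Gamma_i},G_{\Gamma_i}]$ and $H\cap G_K=[G_K,G_K]=1$ (as $K$ is a clique). Normality of $H$ in $G_\Gamma$ then forces the restricted action of $H$ on $T$ to have trivial edge stabilizers and vertex stabilizers conjugate to $[G_{\Gamma_i},G_{\Gamma_i}]$, which are free pro-$p$ by the inductive hypothesis. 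The standard cohomological-dimension bound for pro-$p$ groups acting on pro-$p$ trees (see \cite{horizons}) then yields $\cd_p(H)\le 1$, and the theorem that pro-$p$ groups of cohomological dimension at most one are free pro-$p$ closes the induction.

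The main obstacle is precisely that $H$ is typically not open in $G_\Gamma$, so Theorem~\ref{subgroup theorem} does not apply directly to describe $H$ as a fundamental pro-$p$ group of a graph of pro-$p$ groups; one must instead work through the restricted action of $H$ on the standard pro-$p$ tree $T$ together with the cohomological bound, and in particular verify that the edge and vertex stabilizer cohomological dimensions feed into the expected tree-action inequality in the pro-$p$ setting. The subsidiary points --- properness of the amalgamation, identification $G_\Gamma\cong G_{\Gamma_1}*_{G_K}G_{\Gamma_2}$ at the pro-$p$ presentation level, and the existence of a clique separator in a non-complete chordal graph --- are routine.
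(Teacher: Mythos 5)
Your proposal is correct and takes essentially the same route as the paper: induction on $|V(\Gamma)|$, Dirac's clique-separator decomposition $G_\Gamma\cong G_{\Gamma_1}\amalg_{G_K}G_{\Gamma_2}$, and the key observation (via exponent sums/abelianization) that $[G_\Gamma,G_\Gamma]$ meets the abelian amalgamated subgroup trivially and meets each $G_{\Gamma_i}$ in $[G_{\Gamma_i},G_{\Gamma_i}]$. The only divergence is the final step, where the paper cites the structural subgroup theorem \cite[Theorem~9.6.1]{Ribes-Book} to exhibit $[G_\Gamma,G_\Gamma]$ directly as a free pro-$p$ product of free pro-$p$ groups, while you instead deduce $\cd_p\le 1$ from the action on the standard pro-$p$ tree with trivial edge stabilizers and conclude by Serre's characterization of free pro-$p$ groups; both conclusions are legitimate.
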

\begin{proof}
First, let us make a useful observation. Given a finite simplicial graph $\Pi,$ denote by $X$  its set of vertices.  Then by definition, $G_{\Pi}$ is generated 
by the elements of $X$ and it is defined by the commutator relations coming from all pairs of adjacent vertices. 
Since in all these commutator relations the exponent sum of each letter of $X$ is 0, a word $w$ on $X \cup X^{-1}$ represents an element of $[G_{\Pi}, G_{\Pi}]$ if and only if 
the exponent sum of each letter of $X$ in $w$ is 0. It follows that, if $\Delta$ is an induced subgraph of $\Pi,$ then $[G_{\Delta}, G_{\Delta}] = [G_{\Pi}, G_{\Pi}] \cap G_{\Delta}.$

Now let $\Gamma$ be a chordal graph. We proceed by induction on the number of vertices of  $\Gamma.$ 
Clearly, the result holds when  $\Gamma$  consists of a single vertex. Therefore, let us assume that  $\Gamma$  has $n > 1$ vertices.
If $\Gamma$ is a complete graph, then  $ G_{\Gamma}$ is abelian, so $[G_{\Gamma},  G_{\Gamma}]$ is trivial.
Otherwise, there are proper induced subgraphs $\Gamma_1$ and $\Gamma_2$ of $\Gamma,$ 
such that $\Gamma = \Gamma_1 \cup \Gamma_2$ and $\Lambda: = \Gamma_1 \cap \Gamma_2$ is a complete (possibly empty)
subgraph of $\Gamma$ (\cite{Dir61}). Now it follows from the definition of right-angled Artin pro-$p$ groups that   $G_{\Gamma} = G_{\Gamma_1}\amalg_{G_{\Lambda}}G_{\Gamma_2}.$
Moreover, the above observation yields that 
 \[
[G_{\Gamma_1}, G_{\Gamma_1}] = [G_{\Gamma}, G_{\Gamma}] \cap G_{\Gamma_1},  ~  [G_{\Gamma_2}, G_{\Gamma_2}] = [G_{\Gamma}, G_{\Gamma}] \cap G_{\Gamma_2}
\]
\[
\text{and } [G_{\Gamma}, G_{\Gamma}] \cap G_{\Lambda} =  [G_{\Lambda}, G_{\Lambda}]  = 1.
\]
Since  $\Gamma_1$ and  $\Gamma_2$ are induced subgraphs,   they are both chordal graphs, so by induction, 
it follows that  $[G_{\Gamma_1}, G_{\Gamma_1}]$ and $[G_{\Gamma_2}, G_{\Gamma_2}] $ are free pro-$p$ groups.
By  \cite[Theorem~9.6.1]{Ribes-Book},  we have that  $[G_{\Gamma}, G_{\Gamma}]$ is a free pro-$p$ product of free pro-$p$ groups and therefore, it  is a free pro-$p$ group. 
\end{proof}

 \begin{cor} \label{cor-coherent} Let $\Gamma$ be a chordal graph. Then every finitely generated subgroup 
 of  $G = G_{\Gamma}$ is  of type $FP_{\infty}$. In particular, $G$ is coherent.
 \end{cor}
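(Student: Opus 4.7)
My plan is to deduce the corollary from Lemma~\ref{lemma-chordal} via a Lyndon--Hochschild--Serre spectral sequence argument. Given a finitely generated closed subgroup $H\le G=G_\Gamma$, I set $N:=H\cap[G,G]$ and $Q:=H/N$. Since $[G,G]$ is free pro-$p$ by Lemma~\ref{lemma-chordal}, the subgroup $N$ is free pro-$p$, and since $Q$ embeds as a finitely generated closed subgroup of the free abelian pro-$p$ group $G/[G,G]\cong\Z_p^n$, one has $Q\cong\Z_p^k$ for some $k$. The LHS spectral sequence for $1\to N\to H\to Q\to 1$,
\[
E_2^{p,q}=H^p(Q,H^q(N,\F_p))\Longrightarrow H^{p+q}(H,\F_p),
\]
is concentrated in rows $q=0,1$, with $H^1(N,\F_p)\cong (N^{ab}/pN^{ab})^{*}$ (Pontryagin dual) as a $Q$-module. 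Proving that $H$ is of type $FP_\infty$ therefore reduces to showing every $E_2^{p,q}$ is finite; the row $q=0$ is immediate because $H^p(Q,\F_p)=\Lambda^p\F_p^k$, so the task is to control $H^p(Q,(N^{ab}/pN^{ab})^{*})$ in every degree.

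The key intermediate step will be to show that $N^{ab}$ is finitely generated as a module over the complete Noetherian local ring $\Z_p[[Q]]\cong\Z_p[[X_1,\dots,X_k]]$, whose residue field is $\F_p$. By topological Nakayama this reduces to verifying that $(N^{ab})_Q\otimes_{\Z_p}\F_p$ is finite. Finite generation of $(N^{ab})_Q$ as a $\Z_p$-module will follow from the five-term homology sequence
\[
H_2(Q,\Z_p)\longrightarrow(N^{ab})_Q\longrightarrow H_1(H,\Z_p)\longrightarrow H_1(Q,\Z_p)\longrightarrow 0,
\]
combined with the observations that $H_1(H,\Z_p)=H^{ab}$ is finitely generated (because $H$ is a finitely generated pro-$p$ group) and $H_2(Q,\Z_p)=\Lambda^2\Z_p^k$ is finitely generated. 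This sandwiches $(N^{ab})_Q$ between two finitely generated $\Z_p$-modules.

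Once $N^{ab}/pN^{ab}$ is finitely generated over $\F_p[[Q]]$, the Koszul resolution of $\F_p$ (finite type in every degree, since $Q=\Z_p^k$) delivers
\[
H_p(Q,N^{ab}/pN^{ab})=\mathrm{Tor}_p^{\F_p[[Q]]}(\F_p,N^{ab}/pN^{ab}),
\]
which is a finite $\F_p$-vector space in every degree (being simultaneously finitely generated over $\F_p[[Q]]$ and annihilated by the maximal ideal). Pontryagin duality between continuous cohomology and continuous homology of profinite groups with compact coefficients then gives
\[
H^p(Q,(N^{ab}/pN^{ab})^{*})\cong H_p(Q,N^{ab}/pN^{ab})^{*},
\]
which is also finite in every degree. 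Consequently each $H^n(H,\F_p)$ is finite and $H$ is of type $FP_\infty$. Coherence of $G_\Gamma$ follows at once, since a pro-$p$ group of type $FP_\infty$ is in particular of type $FP_2$, and $FP_2$ coincides with finite presentability for pro-$p$ groups. The main obstacle I anticipate is the careful handling of the topological Nakayama step and of the Pontryagin-duality identification in the profinite/compact setting, both of which I will have to set up precisely for the module $N^{ab}$.
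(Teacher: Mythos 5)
Your argument is correct, and it rests on the same foundation as the paper's: the freeness of the commutator subgroup $[G_\Gamma,G_\Gamma]$ for chordal $\Gamma$ (Lemma 5.1), which exhibits every closed subgroup $H$ as free-by-(free abelian). The difference lies in how the finiteness conditions are then extracted. The paper quotes King's criterion --- a pro-$p$ group $H$ with a free normal subgroup $F$ and abelian quotient $Q=H/F$ is of type $FP_\infty$ if and only if each $H_i(F,\Z_p)$ is a finitely generated $\Z_p[[Q]]$-module --- notes that only $i=1$ is relevant because $F$ is free, and asserts in one line that $H_1(F,\Z_p)$ is finitely generated over $\Z_p[[Q]]$ because $H$ is finitely generated. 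You instead run the two-row Lyndon--Hochschild--Serre spectral sequence directly and check finiteness of every $E_2^{p,q}$ by hand; your topological-Nakayama-plus-five-term-sequence step is precisely the justification of the paper's one-line assertion, so your write-up amounts to a self-contained proof of the special case of King's theorem that is actually needed. What the citation buys the paper is brevity; what your route buys is independence from the reference and an explicit grip on the groups $H^n(H,\F_p)$ themselves. Both arguments conclude coherence identically, from finiteness of $H^2(H,\F_p)$ via Serre's finite-presentability criterion. The only points to nail down carefully in a final version are the two you already flag --- the topological Nakayama lemma for the complete Noetherian local ring $\Z_p[[Q]]$ and the duality $H^p(Q,M^{*})\cong H_p(Q,M)^{*}$ for a profinite $Q$-module $M$ --- both of which are standard and available in Ribes--Zalesskii.
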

 
 \begin{proof} Let $H$ be a finitely generated subgroup of $G_\Gamma$.  Then $H$ is a free by abelian pro-$p$ group, since by Lemma \ref{lemma-chordal}, $G_{\Gamma}$ is a free by abelian pro-$p$ group.
 Let $F$ be a normal  free pro-$p$ subgroup of $H$ such that $H/F=Q$ is abelian. 
 By \cite[Theorem 2, Section 3]{King},  $H$ is of type $FP_{\infty}$ if and only if $H_i(F, \Z_p)$ is finitely generated $\Z_p[[Q]]$-module for each $i \geq 1$. 
   Now since $H$ is finitely generated, $H_1(F,\Z_p)$ is a finitely generated $\Z_p[[Q]]$-module. Moreover, since $F$ is free,  $H_i(F,\Z_p)=0$ for each  $i>1$.
   Hence, $H$ is  of type $FP_{\infty}.$ In particular, $H^2(G, \F_p)$ is finite, which yields that $G$ is coherent, by \cite[Proposition 27]{Se97}.
 \end{proof}

Before  proving the converse of Corollary \ref{cor-coherent},  we define some natural subgroups of right-angled Artin pro-$p$ groups, following an idea of Droms in  \cite{Dr87c}. Let $\Gamma$ be a finite simplicial graph with a vertex set $X.$ 
Given an element $g \in G_{\Gamma},$ with $g=x_1^{k_1}x_2^{k_2}\cdots x_n^{k_n}$, where each $x_i$ is a vertex of $X$,
 define $|g| = k_1 + k_2 + \cdots + k_n.$
Note that  $|g|$ is well defined, since each defining relation of $G_{\Gamma}$ has exponent sum 0. Then the map that sends $g$ to $|g|$ defines a homomorphism
 from the abstract  right-angled Artin group $G(\Gamma)$ to $\Z$, which we extend  to the natural homomorphism $f: G_\Gamma\to \Z_p$ on the corresponding pro-$p$ completions.  
 Let $K_{\Gamma}$ be the kernel of $f$. Observe that for any induced subgraph $\Delta$ of $\Gamma$,  one has $K_\Gamma \cap G_{\Delta}=K_{\Delta}$.

\begin{lem}\label{lemma-chordal} Let $\Gamma$ be a finite connected simplicial graph, and let $\Gamma_1$ and $\Gamma_2$ be proper induced subgraphs of $\Gamma$ such that  $\Gamma=\Gamma_1\cup \Gamma_2$ and $\Delta=\Gamma_1\cap \Gamma_2\neq \emptyset$. Then  $K_\Gamma=K_{\Gamma_1}\amalg_{K_{\Delta}} K_{\Gamma_2}$. 

\end{lem}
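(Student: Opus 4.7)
The plan is to apply pro-$p$ Bass--Serre theory, as developed in Section~\ref{preliminaries}, to the action of $K_\Gamma$ on the standard pro-$p$ tree associated with a natural amalgamated decomposition of $G_\Gamma$. First I would note that, under the hypotheses (read so that $\Gamma=\Gamma_1\cup\Gamma_2$ as graphs, i.e.\ every edge of $\Gamma$ lies in $\Gamma_1$ or in $\Gamma_2$---equivalently, $\Delta$ separates $V(\Gamma_1)\setminus V(\Delta)$ from $V(\Gamma_2)\setminus V(\Delta)$, as is the case in the chordal setting of the preceding lemma), a direct comparison of pro-$p$ presentations yields the amalgamated free pro-$p$ product decomposition
\[
G_\Gamma \;=\; G_{\Gamma_1}\amalg_{G_\Delta}G_{\Gamma_2},
\]
with $G_\Delta\hookrightarrow G_{\Gamma_i}$ the obvious inclusion. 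Let $T$ denote the associated standard pro-$p$ tree: $G_\Gamma$ acts on $T$ with quotient a single edge $e$ joining two vertices $v_1,v_2$, and with stabilizers $G_{\Gamma_i}$ at $v_i$ and $G_\Delta$ at $e$.

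Next, I would restrict this action to the closed normal subgroup $K_\Gamma=\ker(f)$ and compute the quotient graph and stabilizers. Since $\Delta$ (hence each $\Gamma_i$) contains at least one vertex of $\Gamma$, the restriction of $f\colon G_\Gamma\to\Z_p$ to $G_\Delta$ and to each $G_{\Gamma_i}$ is surjective, so $G_\Gamma=K_\Gamma\cdot G_{\Gamma_i}=K_\Gamma\cdot G_\Delta$. Because $K_\Gamma$ is normal, this forces the double-coset spaces $K_\Gamma\backslash G_\Gamma/G_{\Gamma_i}$ and $K_\Gamma\backslash G_\Gamma/G_\Delta$ to be singletons, so $K_\Gamma$ acts transitively on each of the three $G_\Gamma$-orbits in $T$; consequently $K_\Gamma\backslash T$ is again a single edge on two vertices. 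The stabilizers in $K_\Gamma$ are $K_\Gamma\cap G_{\Gamma_i}=K_{\Gamma_i}$ and $K_\Gamma\cap G_\Delta=K_\Delta$, by the observation immediately preceding the statement of the lemma that $K_\Gamma\cap G_\Omega=K_\Omega$ for every induced subgraph $\Omega$ of $\Gamma$.

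Finally, I would invoke a pro-$p$ Bass--Serre subgroup theorem to conclude that $K_\Gamma$ is the fundamental pro-$p$ group of the graph of pro-$p$ groups supported on $K_\Gamma\backslash T$ with those stabilizers, which is exactly $K_{\Gamma_1}\amalg_{K_\Delta}K_{\Gamma_2}$. The main obstacle is that Theorem~\ref{subgroup theorem}, as recalled from \cite{ZM:90}, is formulated for \emph{open} subgroups, whereas $K_\Gamma$ has infinite index in $G_\Gamma$ (the quotient being $\Z_p$); one needs the version of the subgroup theorem valid for closed subgroups whose quotient graph on the pro-$p$ tree is finite, which applies here. Alternatively, one may bypass subgroup theorems altogether by fixing a vertex $v\in\Delta$, using the compatible splittings $G_\Omega=K_\Omega\rtimes\langle v\rangle$ for $\Omega\in\{\Gamma,\Gamma_1,\Gamma_2,\Delta\}$, and then verifying via the universal property of the amalgamated pro-$p$ product that $G_\Gamma\cong (K_{\Gamma_1}\amalg_{K_\Delta}K_{\Gamma_2})\rtimes\langle v\rangle$, whence the claim follows on comparing kernels of $f$.
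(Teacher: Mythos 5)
Your argument is correct and is essentially the paper's own: the authors likewise write $G_\Gamma=G_{\Gamma_1}\amalg_{G_\Delta}G_{\Gamma_2}$, let $K_\Gamma$ act on the associated standard pro-$p$ tree, and reduce everything to transitivity of $K_\Gamma$ on the edges, i.e.\ to $K_\Gamma G_\Delta=G_\Gamma$, which follows from surjectivity of $f|_{G_\Delta}$ since $\Delta\neq\emptyset$. The ``closed-subgroup'' structure theorem that you rightly flag as the missing ingredient is exactly what the paper invokes at this point, namely \cite[Theorem~6.6.1]{Ribes-Book}, so your fallback via the splittings $G_\Omega=K_\Omega\rtimes\langle v\rangle$ is not needed.
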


\begin{proof} First note  that $G_{\Gamma}=G_{\Gamma_1}\amalg_{G_{\Delta}} G_{\Gamma_2}$. Let $S=S(G_\Gamma)$  be the standard pro-$p$ tree associated with this  amalgamated free pro-$p$ product.   
By  \cite[Theorem~6.6.1]{Ribes-Book}, in order to prove the Lemma, it suffices to show that $ K_{\Gamma}$ acts transitively on $E(S),$ or equivalently, that $K_{\Gamma}G_{\Delta}=G_\Gamma$. 
Note that, since $\Delta$ is non-empty, it follows that $f_{|G_{\Delta}}: G_{\Delta} \to \Z_p$ is surjective. Hence $K_{\Gamma}G_{\Delta}=G_\Gamma$, as desired. 

\end{proof}

\begin{lem} \label{tree}
Let $T$ be a finite tree and let $G = G_{T}$ be the  right-angled Artin pro-$p$ group associated to $T$. Then   $K_T$ is a free pro-$p$ group of finite rank.  
\end{lem}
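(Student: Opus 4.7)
The plan is to proceed by induction on $n = |V(T)|$, using a leaf decomposition together with Lemma \ref{lemma-chordal} to reduce to a free pro-$p$ product with $\Z_p$ at each step.

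For the base case $n = 1$, the group $G_T \cong \Z_p$ and $f$ is the identity, so $K_T = 1$ is free pro-$p$ of rank $0$. For the inductive step, suppose $n \geq 2$. Every finite tree on at least two vertices has a leaf, so choose a vertex $v$ of degree $1$ and let $u$ be its unique neighbor. Let $T'$ be the induced subtree obtained from $T$ by deleting $v$ (and its incident edge), and let $e$ denote the subgraph consisting of $u$, $v$, and the edge $\{u,v\}$. Then $T = T' \cup e$ as induced subgraphs, with $T' \cap e = \{u\}$, which is non-empty. By Lemma \ref{lemma-chordal},
\[
K_T \;=\; K_{T'} \amalg_{K_{\{u\}}} K_{e}.
\]

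Next I would identify the edge groups and the amalgamating subgroup explicitly. Since $G_{\{u\}} = \langle u \rangle \cong \Z_p$ and $f$ restricts to the identity on this factor, we have $K_{\{u\}} = 1$, so the amalgamated free pro-$p$ product is actually a free pro-$p$ product:
\[
K_T \;=\; K_{T'} \amalg K_{e}.
\]
Moreover $G_e = \langle u\rangle \times \langle v\rangle \cong \Z_p^2$, and the restriction of $f$ is the summation map $(a,b) \mapsto a+b$, whose kernel $K_e$ is the procyclic group generated by $uv^{-1}$, hence a free pro-$p$ group of rank $1$. By the inductive hypothesis applied to $T'$, the group $K_{T'}$ is free pro-$p$ of finite rank. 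A free pro-$p$ product of two free pro-$p$ groups of finite rank is again a free pro-$p$ group of finite rank, so $K_T$ is free pro-$p$ of finite rank, completing the induction.

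The only place that requires care is the verification that the hypotheses of Lemma \ref{lemma-chordal} apply at the induction step — namely that $T'$ and $e$ are both proper induced subgraphs of $T$ whose intersection is non-empty — and the identification of $K_{\{u\}}$ and $K_e$ from the definition of $f$. Neither is a genuine obstacle; once one verifies that $f_{|G_{\Delta}}$ coincides with the analogous map on the sub-Artin group for every induced subgraph $\Delta$ (which is immediate from the definition of $f$), the induction runs without issue.
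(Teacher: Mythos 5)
Your proposal is correct and follows essentially the same route as the paper's proof: induction on the number of vertices, removing a pending (leaf) vertex, applying Lemma \ref{lemma-chordal} to obtain $K_T = K_{T'} \amalg_{K_{\{u\}}} K_e$, and observing that the amalgamating subgroup is trivial while $K_e$ is procyclic. The only cosmetic difference is that you identify the generator $uv^{-1}$ of $K_e$ explicitly, which the paper leaves implicit.
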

\begin{proof}
We proceed by induction on the number of vertices of  $T.$ Clearly, the result holds when  $T$  consists of a single vertex. 
Therefore, let us assume that  $T$  has $n > 1$ vertices. Since  $T$ is a finite tree, it contains a pending edge $e$ with a pending vertex $v$. 
Let $T_0$ be the tree obtained from $T$ by removing $e$ and $v$. Then $G_T= G_{T_0}\amalg_{\langle w\rangle} (\langle w\rangle \times \langle v\rangle)$, where  $w$ is the other vertex of $e$.
 By Lemma \ref{lemma-chordal}, $K_{T}=K_{T_0}\amalg_{K_{\{w\}}} K_{\{v,w, e\}}$. But ${K_{\{w\}}}$ is trivial, $K_{\{v, w, e\}}$ is infinite procyclic and $K_{T_0}$ is a free pro-$p$ of finite rank, by the induction hypothesis. 
 Hence,  $K_{T}=K_{T_0}\amalg K_{\{v,w, e\}}$ is a free pro-$p$  group of finite rank.
\end{proof}

 \begin{pro}\label{not-coherent} If a finite simplicial graph $\Gamma$ is not  chordal, then $G = G_{\Gamma}$ is not coherent.
 
 \end{pro}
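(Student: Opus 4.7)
The plan is to reduce the problem to the case where $\Gamma$ is the induced cycle $C_n$ on $n \geq 4$ vertices and then exhibit a closed subgroup of $G_{C_n}$ that is finitely generated but not finitely presented as a pro-$p$ group. Since $\Gamma$ is not chordal, it contains such an induced $C_n$. The vertex-map $G_\Gamma \to G_{C_n}$ sending each vertex of $C_n$ to itself and each remaining vertex to $1$ respects the commutator relations---because $C_n$ is induced, any two $C_n$-vertices that are adjacent in $\Gamma$ are already adjacent in $C_n$---and hence is a retraction. In particular $G_{C_n}$ is (isomorphic to) a closed subgroup of $G_\Gamma$, reducing the claim to showing $G_{C_n}$ is not coherent.

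Label the vertices of $C_n$ cyclically as $v_1, \ldots, v_n$ and let $K = K_{C_n}$ be the kernel of the natural map $f : G_{C_n} \to \Z_p$ with $f(v_i)=1$ for all $i$. Write $C_n = \Gamma_1 \cup \Gamma_2$ with $\Gamma_1$ the induced path $v_1 v_2 v_3$ and $\Gamma_2$ the induced path $v_1 v_n v_{n-1} \cdots v_3$, so that $\Delta := \Gamma_1 \cap \Gamma_2 = \{v_1, v_3\}$ has no edges (as $n \geq 4$ forces $v_1, v_3$ to be non-adjacent in $C_n$). Then $G_{C_n} = G_{\Gamma_1} \amalg_{F(v_1,v_3)} G_{\Gamma_2}$, and the decomposition lemma for $K_\Gamma$ proved above gives $K = K_{\Gamma_1} \amalg_{K_\Delta} K_{\Gamma_2}$. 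By Lemma~\ref{tree}, $K_{\Gamma_1}$ and $K_{\Gamma_2}$ are free pro-$p$ groups of finite rank, so $K$ is topologically generated by their generators and is therefore finitely generated.

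To conclude that $K$ is not finitely presented, one shows $\dim_{\F_p} H^2(K, \F_p) = \infty$. The Mayer--Vietoris sequence of the pro-$p$ amalgam, combined with $H^2 = 0$ for the free pro-$p$ vertex groups, yields
\[
H^1(K_{\Gamma_1}, \F_p) \oplus H^1(K_{\Gamma_2}, \F_p) \longrightarrow H^1(K_\Delta, \F_p) \longrightarrow H^2(K, \F_p) \longrightarrow 0.
\]
Since the leftmost term is finite-dimensional over $\F_p$, it suffices to prove that $H^1(K_\Delta, \F_p)$ is infinite-dimensional, equivalently that $K_\Delta$ has countably infinite rank as a free pro-$p$ group.

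This last point is the main obstacle, and the plan is to handle it via the Lyndon--Hochschild--Serre spectral sequence of the split extension $1 \to K_\Delta \to F(v_1,v_3) \to \Z_p \to 1$ with $\F_p$ coefficients. Setting $M = H^1(K_\Delta, \F_p)$, the spectral sequence collapses on $E_2$ since $\cd(\Z_p) = 1$, and comparison with $H^1(F(v_1,v_3), \F_p) = \F_p^2$ and $H^2(F(v_1,v_3), \F_p) = 0$ forces $M^{\Z_p} = \F_p$ and $M_{\Z_p} = 0$. If $M$ were finite-dimensional, the continuous $\Z_p$-action would factor through a finite $p$-group, and any nonzero $\F_p$-representation of a finite $p$-group has both nontrivial invariants and nontrivial coinvariants---contradicting $M_{\Z_p} = 0$ together with $M \supseteq M^{\Z_p} \neq 0$. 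Hence $M$ is infinite-dimensional, completing the proof.
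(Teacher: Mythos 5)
Your argument is essentially the paper's: the same subgroup $K_{C_n}=\ker(G_{C_n}\to\Z_p)$, the same splitting of the induced cycle into two trees meeting in two non-adjacent vertices, the same application of Lemma~\ref{lemma-chordal}, Lemma~\ref{tree} and the Mayer--Vietoris sequence to conclude that $H^2(K_{C_n},\F_p)$ is infinite while $K_{C_n}$ is finitely generated. You are in fact slightly more complete than the paper in two respects: you spell out the reduction from a general non-chordal $\Gamma$ to an induced cycle via the retraction $G_\Gamma\to G_{C_n}$, and you explicitly record that $K_{C_n}$ is finitely generated, both of which the paper leaves implicit. The one genuine divergence is the justification that $K_{\{v_1,v_3\}}$ has infinite rank: the paper simply observes that it is the normal closure of $v_1^{-1}v_3$ in $F(v_1,v_3)$ and invokes the standard fact that a nontrivial normal subgroup of infinite index in a nonabelian free pro-$p$ group is free of infinite rank, whereas you derive it from the Lyndon--Hochschild--Serre spectral sequence of $1\to K_\Delta\to F(v_1,v_3)\to\Z_p\to 1$ together with the fixed-point/coinvariant dichotomy for finite $p$-groups; your computation ($M^{\Z_p}\cong\F_p$, $M_{\Z_p}\cong H^1(\Z_p,M)=0$, hence $\dim_{\F_p}M=\infty$) is correct and has the advantage of being self-contained, at the cost of being less elementary than the citation-free one-line observation the paper relies on.
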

 \begin{proof} Suppose $\Gamma$ is a circuit of length greater than 3 and let $v,w$ be two non-adjacent vertices of $\Gamma$. 
 Then there are proper induced subgraphs $T_1$, $T_2$ of  $\Gamma$ such that $\Gamma=T_1\cup T_2$, $T_1\cap T_2=\{v,w\}$ and  
 $T_1,T_2$ are trees, and so $G_\Gamma=G_{T_1}\amalg_{F(v,w)} G_{T_2}$ splits as an amalgamated free pro-$p$ product.   
 Note that $K_{\{v,w \}}$ is the normal closure of $v^{-1}w$ in $F(v,w)$; in particular,  $K_{\{v,w \}}$  is a free pro-$p$ group of infinite rank. 
  By Lemma \ref{lemma-chordal} and Lemma  \ref{tree},  $K_{\Gamma}= K_{T_1}\amalg_{K_{\{v,w \}}} K_{T_2}$, where $K_{T_1}$ and $K_{T_1}$  are free pro-$p$ groups of finite rank.

  \smallskip
   
Now by \cite[Prop.~9.2.13]{RZ10}, we have the following Mayer-Vietoris sequence in cohomology

\[
 \xymatrix@C=0.5truecm{ 0\ar[r] & H^1( K_{\Gamma},\F_p)\ar[r]& H^1( K_{T_1},\F_p)\oplus H^1( K_{T_2},\F_p)\ar[r]& H^{1}( K_{\{v,w \}}, \F_p)  \ar`r[d]`[l] `[dlll] `[dll] [dll]  \\
&H^2( K_{\Gamma}, \F_p)\ar[r]& H^2( K_{T_1},\F_p)\oplus H^2( K_{T_2},\F_p)\ar[r]&\cdots}
\]

   Then $H^1(K_{T_1},  \F_p)\oplus H^1(K_{T_2},  \F_p)$ is finite and  $H^2(K_{T_1},  \F_p)\oplus H^2(K_{T_2},  \F_p)=0$. 
 On the other hand,  $H^1(K_{\{v,w \}},  \F_p)$ is infinite, since $K_{\{v,w \}}$ is a free pro-$p$ group
  of infinite rank. Thus $H^2(K_{\Gamma},  \F_p)$ is infinite, or equivalently, $K_{\Gamma}$ is not finitely presented (see  \cite[Proposition 27]{Se97}). Hence, $G$ is not coherent.
 \end{proof} 
 
 \begin{proof}[\textbf{Proof of Theorem~\ref{coherence}}]
The proof follows from Corollary~\ref{cor-coherent} and Proposition~\ref{not-coherent}. 
\end{proof}

\ackn 
The first author acknowledges support from the Alexander von Humboldt Foundation, CAPES (grant 88881.145624/2017-01), CNPq and FAPERJ. 
The second  author is partially supported by FAPDF and CNPq.  The first author is grateful to Claudio Quadrelli and Thomas Weigel for telling him about Conjecture 1.1 and for inspiring discussions.
This work started during the workshop ``Group theory days in D\"usseldorf 2020''; the authors thank the Heinrich Heine University in D\"usseldorf for its hospitality.
\bibliographystyle{plain}

\begin{thebibliography}{10}
\bibitem{Ag13} I.~Agol, \textit{The virtual Haken conjecture}, Doc.\   Math. \textbf{18} (2013), 1045--1087. With an appendix by Ian Agol, Daniel Groves and Jason  Manning.
\bibitem{BeNiJaJo07} D.~Benson, N.~Lemire,  J.~Min\'a\v{c}  and J.~Swallow,   \textit{Detecting pro-$p$ groups that are not absolute Galois groups}, J.\  reine angew.\  Math.  \textbf{613} (2007), 175--191.
\bibitem{BeBra07} M.~Bestvina   and M.~Brady,   \textit{Morse theory and finiteness properties for groups}, Invent.\   Math.  \textbf{129} (1997), 445--470.
\bibitem{CaQu19} A.~Casella and C.~Quadrelli,  \textit{Right-angled Artin groups  and enhanced Koszul properties}, preprint, available at arXiv:1907.03824v2.
\bibitem{Cha07} R.~Charney, \textit{An introduction to right-angled Artin groups}, Geom.\   Dedicata \textbf{125} (2007), 141--158.
\bibitem{CleFlo17} C.~De Clercq and M.~Florence,  \textit{Lifting theorems and smooth profinite groups}, preprint, available
at arXiv:1710.10631, 2017.
\bibitem{Dir61} G.~A.~Dirac,  \textit{On rigid circuit graphs}, Abhandlungen aus dem Mathematischen Seminar der Universit\"at Hamburg,  \textbf{25} (1961), 71--76.
\bibitem{Dr87} C.~Droms, \textit{Subgroups of graph groups}, J.\   Algebra \textbf{110} (1987), 519--522.
\bibitem{Dr87c} C.~Droms, \textit{ Graph groups, coherence, and three-manifolds}, J.\   Algebra \textbf{106} (1987), 484--489.
\bibitem{Ef97} I.~Efrat, \textit{Pro-$p$ Galois groups of algebraic extensions of $ \Q$},  J.\  Number Theory   \textbf{64} (1997), 84--99.
\bibitem{Ef98} I.~Efrat, \textit{Small maximal pro-$p$ Galois groups}, Manuscripta  \ Math.   \textbf{95} (1998), 237--249.
\bibitem{EfQu19} I.~Efrat and C.~Quadrelli, \textit{The Kummerian property and maximal pro-$p$ Galois groups}, J.\   Algebra \textbf{525} (2019), 284--310.
\bibitem{HaWi08} F.~Haglund and D.~T.~Wise, \textit{Special cube complexes}, Geom.\   Funct.\  Anal.  \textbf{17} (2008), 1551--1620.
\bibitem{KaMa12} J.~Kahn and V.~Markovic, \textit{Immersing almost geodesic surfaces in a closed hyperbolic three manifold}, Ann.\ of    Math.   \textbf{175} (2012), 1127--1190.
\bibitem{KiRo80} K.~H.~Kim and F.~W.~Roush, \textit{Homology of certain algebras defined by graphs}, J.\  Pure Appl.\  Algebra \textbf{17} (1980), 179--186.
\bibitem{King}  J.~D.~King,  \textit{Homological finiteness conditions for pro-$p$ groups}. Comm.\  Algebra\textbf{27} (1999), 4969--4991.
\bibitem{KloSno14} B.~Klopsch and I.~Snopce, \textit{A characterization of uniform pro-p groups}, Q.\  J.\   Math.   \textbf{65} (2014), 1277--1291.
\bibitem{KroWi16} R.~Kropholler and G.~Wilkes, \textit{Profinite properties of RAAGs and special groups}, Bull.\   London Math.\  Soc.  \textbf{48} (2016), 1001--1007.
\bibitem{Lo10} K.~Lorensen, \textit{Groups with the same cohomology as their pro-$p$ completions}, J.\  Pure Appl.\  Algebra \textbf{214} (2010), 6--14. 
\bibitem{MiTa16} J.~Min\'a\v{c}   and  N.~D.~T\^an  \textit{Triple Massey products vanish over all fields}, J.\  Lond.\  Math.\  Soc.  \textbf{94} (2016), 909--932.
\bibitem{MiPaPaTa18} J.~Min\'a\v{c},  M.~Palaisti, F.~W.~Pasini  and N.~D.~T\^an  \textit{Enhanced Koszul properties in Galois cohomology}, preprint, available at arXiv:1811.09272.
\bibitem{MiPaQuTa18} J.~Min\'a\v{c},  F.~W.~Pasini, C.~Quadrelli  and N.~D.~T\^an  \textit{Koszul algebras and quadratic duals in Galois
cohomology}, preprint, available at arXiv:1808.01695.
\bibitem{Ple80} A.~Pletch,  \textit{Profinite duality groups},  J.  \ Pure Appl.  \ Algebra \textbf{16} (1980), 55--74.

\bibitem{Qu14} C.~Quadrelli,  \textit{Bloch-Kato pro-$p$ groups and locally powerful groups}, Forum\  Math.   \textbf{26} (2014), 793--814.
\bibitem{QuSnVa19} C.~Quadrelli,  I.~Snopce and M.~Vanacci,  \textit{On Pro-$p$ groups with quadratic cohomology}, preprint, available at arXiv:1906.04789.
\bibitem{QuWe18} C.~Quadrelli and Th.~Weigel,  \textit{Profinite groups with a cyclotomic p-orientation}, preprint, available
at arXiv:1811.02250, 2018.
\bibitem{RZ10} L.~Ribes and P.~Zalesskii, Profinite groups. Second edition. Springer-Verlag, Berlin (2010).
 \bibitem{horizons} L.~Ribes and
  P.~Zalesskii, \textit{Pro-$p$ trees and applications.} In: New horizons in pro-$p$ groups. Progr.\ Math. vol. \textbf{184}, pp. 75--119. Birkhauser Boston (2000).
\bibitem{Ribes-Book} L.~Ribes, Profinite graphs and groups. Springer, Cham (2017).
\bibitem{Ro02} M.~Rost,  \textit{Norm varieties and algebraic cobordism}, Proceedings of the International Congress of Mathematicians. Vol. II (Beijing 2002), Higher Ed. Press, pp. 77--85. 
\bibitem{Se97} J.~P.~Serre, Galois Cohomology. Springer-Verlag, Berlin (1997).
\bibitem{Serre-77} J.~P.~Serre, Trees. Springer-Verlag, Berlin (2003).
\bibitem{Ta76} J.~Tate,  \textit{Relations between $K_2$ and Galois cohomology}, Invent.\  Math.   \textbf{36} (1976), 257--274.
\bibitem{Vo11} V.~Voevodsky, \textit{On motivic cohomology with $ \Z / l$-coefficients}, Ann.\ of Math. \textbf{174} (2011), 401--438.
\bibitem{We09} C.~Weibel, \textit{The norm residue isomorphism theorem}, J.\   Topol.  \textbf{2} (2009), 346--372.
\bibitem{Wi11} D. T.~Wise, The structure of groups with a quasi-convex hierarchy. Preprint (2011). Available from http://www.math.mcgill.ca/wise/papers.html.
\bibitem{Wu85} T.~W\"urfel, \textit{On a class of pro-$p$ groups occuring in Galois theory}, J.\ Pure Appl. \ Algebra \textbf{36} (1985), 95--103.
\bibitem{ZM-89b} P. A. ~Zalesskii and O. V.~Melnikov, \textit{Subgroups of profinite groups acting on trees}, Math.\ USSR Sbornik, \textbf{63} (1989), 405--424.

\bibitem{ZM:90}
{P. A.~Zalesskii and O. V.~Mel'nikov},
\textit{Fundamental Groups of Graphs of Profinite Groups},
{Leningrad Math. J.},
{\bf 1},
{(1990)},
{921--940}.

\end{thebibliography}

\end{document}